\numberwithin{figure}{subsection}
\newcommand{\C}{\mathbb{C}}
\newcommand{\Log}{\operatorname{Log}} 
\newcommand{\R}{\mathbb{R}}
\newcommand{\Z}{\mathbb{Z}}
\newcommand{\N}{\mathbb{N}}
\theoremstyle{plain}
\newtheorem{thm}{Theorem}[section]     
\newtheorem{lemma}[thm]{Lemma}
\newtheorem{prop}[thm]{Proposition}
\newtheorem{corollary}[thm]{Corollary}
\newtheorem{ex}[thm]{Example}
\theoremstyle{definition}
\theoremstyle{definition}
\newtheorem{defn}[thm]{Definition}
\newtheorem{remark}[thm]{Remark}
\author{Ali Saraeb$^{*}$}
\address{$^{*}$Mathematics Department, The Ohio State University, 231 W. 18th Avenue, Columbus, Ohio 43210, USA}
\email{saraeb.1@osu.edu}
\title{Unimodular Fake M\"obius Functions}
\date{}
\begin{document}

\begin{abstract}
Let $\mathbb{S}^1$ denote the unit circle. We introduce and develop the analytic and \emph{bias} theory of \emph{unimodular fake M\"obius functions}, i.e. multiplicative functions $\mathfrak{f}:\N\to\mathbb{S}^1\cup\{0\}$ whose prime-power values are prescribed by a fixed sequence $\{\varepsilon_k\}_{k\ge1}$ via the rule $\mathfrak{f}(p^k)=\varepsilon_k$ for every prime $p$ and every $k\ge1$.

A key feature of these functions is that their Dirichlet series admit a factorization into complex powers of the Riemann zeta function. Our main analytic result is an explicit formula for the smoothed summatory function $\sum_{n\ge1}\mathfrak{f}(n)e^{-n/x}$, consisting of a leading main term together with a sequence of lower-order terms. The formula may be viewed as an extension of the Selberg-Delange method and is expected to be of independent interest.

As an application, we introduce a notion of bias at a \emph{natural} scale and obtain an explicit criterion distinguishing \emph{persistent} bias, \emph{apparent} bias, and no bias for unimodular fake M\"obius functions.

\end{abstract}

\keywords{fake M\"obius functions, asymptotic bias, explicit formulas, Selberg-Delange method, Watson's lemma, Laplace integrals, Riemann zeta function}

\maketitle
\section{Introduction and Main Results} \label{sec:introduction}

\subsection{Motivation} Many probabilistic and number theoretic quantities of interest exhibit large fluctuations, and a bound on their size tells only part of the story. A natural next question is whether, after the correct normalization, the fluctuations remain centered or exhibit a systematic offset. When such an offset persists in the limit, it is sometimes referred to in the statistics literature as an \emph{asymptotic bias} \cite[Sec. 2.5.2]{shao1999mathematical}.

This is illustrated by the central limit theorem (CLT). Let $X_1,X_2,\ldots$ be i.i.d.\ random variables with mean $\mu$ and variance $\sigma^2$, and let $\overline X_n =n^{-1}\sum_{j\le n} X_j$ denote the sample mean. The CLT states that
$\sqrt n(\overline X_n-\mu) \to \mathcal N(0,\sigma^2)$ as $n\to\infty,$ where the convergence is in distribution. At the scale $\sqrt{n}$, the fluctuations are of order one, and there is no deterministic offset left over. In this sense, $\overline X_n$ is said to be asymptotically unbiased at its \emph{natural} scale $\sqrt{n}$. 

One way a nonzero asymptotic bias $b$ can arise is when one adds a deterministic correction of the same order as the fluctuations. For example, if one sets $Y_n:=\overline X_n+b/\sqrt{n}$, then the normalized variable $\sqrt n(Y_n-\mu)=\sqrt n(\overline X_n-\mu)+b$ converges in distribution to $\mathcal N(b,\sigma^2)$ as $n\to \infty$. The fluctuation has not changed; what has changed is the center. The normalized quantity retains a nonzero offset $b$ in the limit. 

This raises a similar question for arithmetic sums. Do sums of multiplicative functions exhibit similar centered or shifted behavior? This question is closely connected to recent problems in probabilistic and analytic number theory. 

In probabilistic number theory, partial sums of random multiplicative functions are a central and difficult topic, and they have important connections with harmonic analysis \cite{harper2015note}. Recent breakthroughs in this area concern Helson's conjecture and the moment theory of random multiplicative functions \cite{HARPER_2020, harper2015note, harper2010limit,harper2020moments,harper2023almost}.

There is also an interesting deterministic side to the question. Sums of the classical M\"obius, Liouville, and generalized Liouville functions have been studied extensively due to their connections with the Riemann hypothesis \cite{ingham1942two,humphries2013distribution,mossinghoff2017liouville} and the distribution of the number of prime divisors in residue classes \cite{coons2011residue, humphries2019biases}.

In this paper, we develop a bias theory for a broad class of multiplicative functions, the \emph{unimodular fake M\"obius functions}, whose values at a natural number $n$ depend only on the exponents in the prime factorization of $n$, and not on the prime factors themselves. This class contains many interesting examples, including the generalized Liouville functions.

\subsection{Bias phenomena in arithmetic functions} The probabilistic notion of asymptotic bias has a direct analogue in arithmetic, where one studies arithmetic partial sums and their sign changes. A classical example is the Mertens partial sum
\begin{equation}\label{mertens}
    M(x):=\sum_{n\le \lfloor x \rfloor}\mu(n),
\end{equation}
where $\mu$ is the M\"obius function, defined by $\mu(n)=0$ if $n$ is divisible by the square of a prime, and $\mu(n)=(-1)^k$ if $n$ is a product of $k$ distinct primes. The size and sign changes of $M(x)$ have a long history, going back to the work of Mertens, Landau, and Ingham \cite{mertens1897zahlentheoretische,ingham1942two}. Heuristically, one expects $M(x)/\sqrt{x}$ to oscillate around $0$ without any preference for either sign, in contrast with the Liouville sum discussed below. Mertens computed its values extensively and conjectured the bound $|M(x)|\le \sqrt{x}$ \cite{mertens1897zahlentheoretische}, a conjecture now known to be false \cite{odlyzko1984disproof}. What made this conjecture so compelling is that a bound of this shape would imply the Riemann hypothesis (RH) and the simplicity of the zeros of $\zeta(s)$ (SZC) \cite{ingham1942two}. Mertens's conjecture was eventually disproved by Odlyzko and te Riele \cite{odlyzko1984disproof}.

A closely related example is the Liouville summatory function 
\begin{equation}
    L(x):=\sum_{n\le \lfloor x \rfloor}\lambda(n),
\end{equation}
defined as the partial sum of the classical Liouville function $\lambda(n):=(-1)^{\Omega(n)},$ where $\Omega(n)$ is the total number of prime factors of $n$ counted with multiplicity \cite{humphries2013distribution}. Historically, P\'olya observed that $L(x)$ takes negative values for a large range of $x$ and asked whether $L(x)$ is non-positive for all $x\ge2$. He also noted that an affirmative answer would imply the Riemann hypothesis \cite{polya1919verschiedene}. Haselgrove later proved the answer is negative and showed that $L(x)$ changes sign infinitely often \cite{Haselgrove}. Thus $L(x)$, like $M(x)$, oscillates, but it appears to have a negative bias (see below).

The source of this contrast is already apparent at the level of Dirichlet series 
\begin{equation}
\sum_{n\ge1}\frac{\mu(n)}{n^s}=\frac{1}{\zeta(s)},
\qquad
\sum_{n\ge1}\frac{\lambda(n)}{n^s}=\frac{\zeta(2s)}{\zeta(s)}.
\end{equation}
Assuming RH and SZC, we have the explicit formulas
\begin{equation} \label{eq:M explicit formula}
\frac{M(x)}{\sqrt{x}}
= \sum_{\substack{\rho=\frac{1}{2}+i\gamma\\|\gamma|\le T}}
\frac{x^{i\gamma}}{\rho \zeta'(\rho)}+\mathcal E_M(x,T),
\end{equation}
and
\begin{equation}\label{eq:L explicit formula}
\frac{L(x)}{\sqrt{x}}
= \frac{1}{\zeta(\tfrac12)}
+\sum_{\substack{\rho=\frac{1}{2}+i\gamma\\|\gamma|\le T}}
\frac{\zeta(2\rho) x^{i\gamma}}{\rho \zeta'(\rho)}+\mathcal E_L(x,T),
\end{equation}
where the sums are over the nontrivial zeros $\rho$ of $\zeta(s),$ and $\mathcal E_M(x,T)$ and $\mathcal E_L(x,T)$ are error terms \cite{ng2004distribution, humphries2013distribution}. The key structural difference is the constant term $1/\zeta(\tfrac12)$ in the Liouville formula, coming from the pole of $\zeta(2s)/\zeta(s)$ at $s=\tfrac12$. No analogous term appears for $M(x)$, since $1/\zeta(s)$ has no pole there. Heuristically, this suggests that $M(x)/\sqrt{x}$ oscillates around $0$ (asymptotically unbiased), while $L(x)/\sqrt{x}$ oscillates around the value $\zeta(\tfrac12)^{-1}=-0.68476523 \dots$ (asymptotically biased).

\begin{figure}[!ht]
    \centering
    \includegraphics[width=0.5\linewidth]{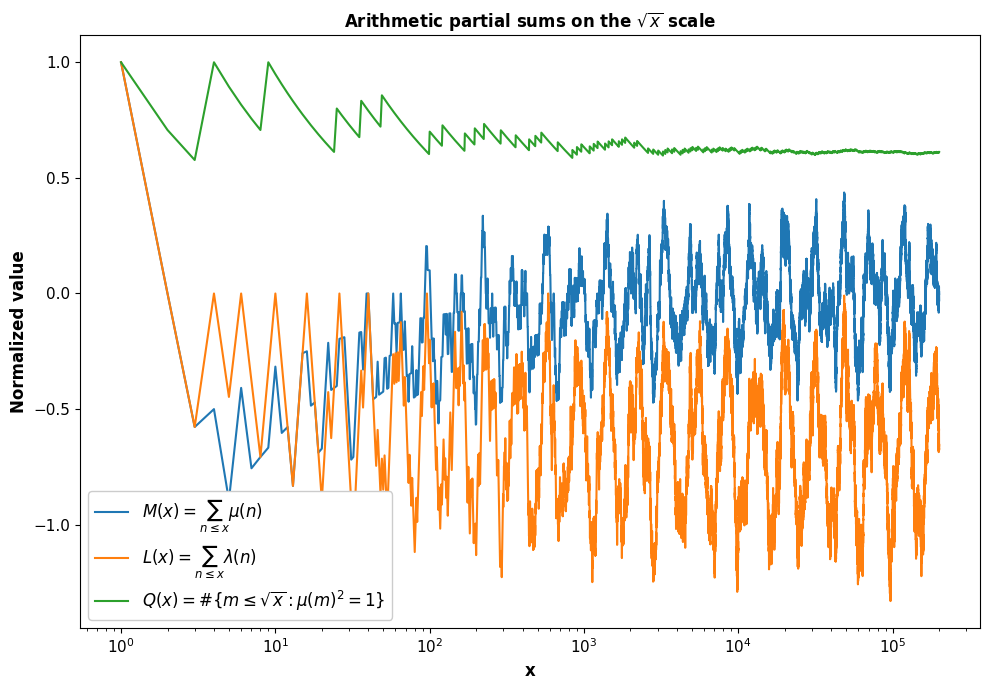}
    \caption{\small Plots of the normalized Mertens, Liouville, and squarefree counting summatory functions on the $\sqrt{x}$-scale, illustrating \emph{unbiased, apparent}, and \emph{persistent} behavior, respectively.}
    \label{fig:1}
\end{figure}

There is a third type of behavior that is not captured by the above two examples. To illustrate, consider the squarefree counting summatory function 
\begin{equation}
    Q(x):=\#\{m\le \sqrt{x}:\mu(m)^2=1\}=\sum_{m \le \sqrt x} \mu^2(m).
\end{equation}
It is well known that the squarefree numbers have density $6/\pi^2= 0.607927 \dots$ \cite{jakimczuk2013simple}. Therefore, unlike the Liouville function, where the normalized sum oscillates but does not converge, $Q(x)/ \sqrt{x}$ converges as $x \to \infty$ to the nonzero limit $6/\pi^2.$  

\Cref{fig:1} illustrates the three different behaviors at the scale $\sqrt{x}$: $M(x)$ appears \emph{unbiased} ($M(x)/\sqrt{x}$ oscillates around $0$), $L(x)$ appears \emph{apparently biased} ($L(x)/\sqrt{x}$ oscillates around the shifted level $\zeta(\tfrac12)^{-1}$), and $Q(x)$ appears \emph{persistently biased} ($Q(x)/\sqrt{x}$ converges to $6/\pi^2$). Formal definitions will be given in a later section.

\subsection{Unimodular fake M\"obius functions and zeta-factorization}

Recall that $\mu(n)$ is a multiplicative function that is completely determined by its prime-power values
\begin{equation}
    \mu(1)=1,\qquad \mu(p)=-1,\qquad \mu(p^k)=0 (k\ge2),
\end{equation}
and these values do not depend on the underlying prime. Equivalently, $\mu(n)$ is completely determined by the Euler product identity
\begin{equation}
\sum_{n\ge 1}\frac{\mu(n)}{n^s} =\frac{1}{\zeta(s)} =\prod_{p}\bigl(1-p^{-s}\bigr), \qquad \Re(s)>1,
\end{equation}
see for instance \cite[Sec. 2.2]{apostol2013introduction}.
This suggests a broad class of multiplicative functions obtained by prescribing their values on prime powers.
Following the ``fake $\mu$'' philosophy of Martin-Mossinghoff-Trudgian \cite{martin2023fake}
(and the subsequent oscillation theory in \cite{martin2025oscillation}),
we consider multiplicative functions $\mathfrak{f}$ whose prime-power values are prescribed by a fixed sequence
$(\varepsilon_k)_{k\ge0}$ with
\begin{equation}\label{eq: epsk seq}
\varepsilon_0=1,
\qquad \varepsilon_k\in\mathbb{S}^1\cup\{0\}\ (k\ge1),
\qquad \mathfrak{f}(p^k)=\varepsilon_k\ \forall\ k\ge 1, \forall\ p\ \text{prime}.
\end{equation}

This class contains the three examples discussed above:
\begin{itemize}
    \item The M\"obius function $\mu$ corresponds to $\varepsilon_1=-1$ and
    $\varepsilon_k=0$ for $k\ge2$.
    
    \item The Liouville function $\lambda(n)=(-1)^{\Omega(n)}$ corresponds to
    $\varepsilon_k=(-1)^k$ for all $k\ge0$.
    
    \item The fake mu corresponding to the squarefree counting example $Q(x)$ is the one determined by
    $\varepsilon_0=1$, $\varepsilon_1=0$, $\varepsilon_2=1$, and
    $\varepsilon_k=0$ for $k\ge3$.
\end{itemize}

For a given fake mu $\mathfrak{f}$, define the associated Dirichlet series\footnote{The second equality follows by the multiplicativity of $\mathfrak{f}$ and the absolute convergence of $F_{\mathfrak{f}}(s)$ on $\Re(s) >1,$ since $|\mathfrak{f}(n)| \le 1$ for all $n\ge 1.$}
\begin{equation}
F_{\mathfrak{f}}(s):= \sum_{n\ge 1}  \frac{\mathfrak{f} (n)}{n^s} =\prod_p \sum_{k\ge0}\varepsilon_k p^{-ks},
\qquad \Re(s)>1.
\end{equation}

A central fact used throughout the paper is that the above Euler product admits a factorization into complex powers of $\zeta(s).$

\begin{thm}
Let $\mathfrak{f}$ be a unimodular fake M\"obius function determined by \eqref{eq: epsk seq}.
Set
\begin{equation}\label{eq: z w param }
z:=\varepsilon_1,
\qquad
w:=\varepsilon_2-\frac{\varepsilon_1(\varepsilon_1+1)}{2}.
\end{equation}
Then
\begin{equation}\label{eq: Ff factor}
F_{\mathfrak{f}}(s)=\zeta(s)^{z} \zeta(2s)^{w} G_{\mathfrak{f}}(s),
\end{equation}
where $G_{\mathfrak{f}}(s)$ is a holomorphic Euler product on $\Re(s)>1/3$, and is uniformly bounded on $\Re(s)\ge \sigma_1$ for each fixed $\sigma_1>1/3$.
\end{thm}

\begin{remark}
Throughout the remainder of the paper, the symbols $z$ and $w$ will always denote the parameters defined in \eqref{eq: z w param }.
\end{remark}

\subsection{Main results}

\subsubsection{The analytic theory} Martin-Mossinghoff-Trudgian \cite{martin2023fake} initiated the study of
fake $\mu$'s with coefficients $\varepsilon_k\in\{-1,0,1\}$, focusing on the summatory function
\begin{equation}\label{eq:summatory}
A_{\mathfrak{f}}(x):=\sum_{n\le x}\mathfrak{f}(n).
\end{equation}
They introduced the notions of \emph{persistent bias} and \emph{apparent bias} for $A_{\mathfrak{f}}(x)$ at the scale $x^{1/2}$ based on an asymptotic formula, valid for large $x$, of the form
\begin{equation}
    A_{\mathfrak{f}}(x)= ax+bx^{1/2} + \mathcal{E}(x),
\end{equation}
where $a,  b \in \R$ and ``$\mathcal{E}(x)$ is an error term that either tends to $0$ in the limit,
or is expected to oscillate about $0$ in a roughly balanced manner'' \cite[Abstract]{martin2023fake}.  More recently, Martin-Yip \cite{martin2025oscillation} developed
a more general theory on the oscillatory behavior of the summatory function of fake $\mu$'s with $\varepsilon_k\in\{-1,0,1\}$ at a general scale $x^{1/(2\ell)}$ for $\ell \ge 1$.

Our work studies the substantially larger class \eqref{eq: epsk seq}, in which the parameters $\varepsilon_k$ are allowed to be arbitrary complex phases on $\mathbb{S}^1$. This includes natural examples such as the generalized Liouville functions $\mathfrak{f}(n)=\xi^{\Omega(n)}$, where $\xi$ is a root of unity. In this setting, complex powers of $\zeta(s)$ become an essential part of the analysis and introduce new analytic difficulties. Several other examples are discussed in Section \ref{sec:examples}.

Using the factorization \eqref{eq: Ff factor} and a suitable contour, we derive an explicit formula for the smoothed summatory function
\begin{equation}
    A_{\mathfrak{f}}^{\exp}(x)= \sum_{n \ge 1} \mathfrak{f}(n)   e^{-n/x}.
\end{equation}

Before we state the main analytic theorems, we recall that it is standard that there exist arbitrarily large $T$ such that 
\begin{equation}\label{eq: height T lower bd}
  |T-\gamma|  \gg  \frac{1}{\log T}
  \qquad\text{for every zero } \rho=\tfrac12+i\gamma,
\end{equation}
see, for example, \cite[p. 126]{gonek1984mean}.

\begin{thm}[Explicit formula for $\displaystyle A_{\mathfrak{f}}^{\exp}$]
\label{thm: exp form Af}
Assume RH and SZC. Assume in addition that $z,w \not \in \Z$, $-1<\Re z<1,$ and $\Re(w)<1$. Fix $\tfrac13<a<\tfrac12$ and $c>1$, and write $L:=\Log x$. Then there exists a constant $T_0=T_0(a,c,\mathfrak{f})>0$ such that for every $x \ge 3$ and every $T\ge T_0$ satisfying \eqref{eq: height T lower bd}, we have 
\begin{equation}\label{eq: exp form Af}
A_{\mathfrak{f}}^{\exp}(x)-\Delta_1(x) =\Delta_{1/2}(x) +\sum_{\substack{\rho=\frac{1}{2}+i\gamma\\|\gamma|<T}}\Delta_\rho(x)  +O\bigl(x^c e^{-T}+x^a\bigr).
\end{equation}
In particular, if $T>(c-a)\Log x$ and $x$ is large, then the error term in \eqref{eq: exp form Af} is $O(x^a).$

Moreover, each main term has a Laplace representation
\begin{equation} \label{eq: Delta xi}
\Delta_\xi(x)=\kappa_\xi x^\xi\int_0^{\beta_\xi} e^{-Lu} u^{-\alpha_\xi} \mathcal J_\xi(u;\mathfrak{f}) du,
\qquad
\xi\in\Bigl\{1,\tfrac12\Bigr\}\cup\{\rho:|\Im\rho|<T\},
\end{equation}
with $\mathcal J_\xi( \cdot ;\mathfrak{f})$ holomorphic near $u=0$ and
\begin{equation}
(\kappa_1,\alpha_1,\beta_1)=\Bigl(\tfrac{\sin(\pi z)}{\pi}, z, \tfrac12\Bigr),\qquad
(\kappa_{1/2},\alpha_{1/2},\beta_{1/2})=\Bigl(\tfrac{\sin(\pi(z+w))}{\pi} 2^{-w+1}, w, \tfrac12-a\Bigr),
\end{equation}
\begin{equation}
(\kappa_{\rho},\alpha_{\rho},\beta_{\rho})=\Bigl(-\tfrac{\sin(\pi z)}{\pi}, -z, \tfrac12-a\Bigr),\qquad \rho=\tfrac12+i\gamma.
\end{equation}
The integrands $\mathcal J_\xi$ are given explicitly in Section \ref{sec: the integrands}.
\end{thm}

\begin{remark}
When  $z$ or $w$ is an integer, some of the main terms $\Delta_\xi$ are given by a residue term 
\begin{equation}
    \Delta_{\xi}(x)= \operatorname{Res}_{s=\xi} \Bigl( F_{\mathfrak{f}}(s) \Gamma(s) x^s\Bigr).
\end{equation}
In \cite{martin2023fake}, Martin-Mossinghoff-Trudgian covered the cases where $z \in \{-1,0,1\}$ and $w \in \{-2,-1,0,1\}$.
\end{remark}

\begin{remark}
Based on the definition of $z,w$ in \eqref{eq: z w param }, it can be shown that they must satisfy $\Re(z)\in[-1,1]$ and $\Re(w) \in[-2, \frac{25}{16}]$ (see Section \ref{sec: parameter ranges}). The present paper covers the parameter region
\begin{equation}
    \{(z,w): z, w \not \in \Z, \quad -1< \Re(z) < 1, \quad \Re(w) <1\}.
\end{equation}
The complementary parameter region requires a different analytic method and will be discussed in a future paper.
\end{remark}

\begin{thm}[Asymptotic expansions for $\Delta_\xi(x)$]
\label{thm: watson exp for Delta}
Assume the setup of Theorem \ref{thm: exp form Af}.
For each fixed $\xi\in\{1,\tfrac12\}\cup\{\rho:|\gamma|<T\}$, write $\mathcal J_\xi(u;\mathfrak{f})=\sum_{k\ge0}\lambda_{\xi,k} u^k$. Then for every fixed $M \in \N$, we have, for all large $x,$
\begin{equation}
\Delta_\xi(x)=\kappa_\xi x^\xi\bigl(
\sum_{k=0}^{M}\lambda_{\xi,k} \Gamma(1-\alpha_\xi+k) (\Log x)^{-(1-\alpha_\xi+k)}
+O_{M} \bigl((\Log x)^{-(2-\Re\alpha_\xi+M)}\bigr)\bigr),
\end{equation}
with $(\kappa_\xi,\alpha_\xi)$ as in Theorem \ref{thm: exp form Af}. The coefficients $\lambda_{\xi,k}$ are obtained by expanding the functions $\mathcal J_\xi(u;\mathfrak{f})$ from Theorem \ref{thm: exp form Af} at $u=0$.
\end{thm}

\subsubsection{The bias theory} As an application of the analytic theory, we introduce the notions of \emph{persistent bias} and \emph{apparent bias} at the scale
\begin{equation}\label{eq:scale}
x^{1/2}(\Log x)^{w-1}, \qquad w=\varepsilon_2 - \frac{\varepsilon_1(\varepsilon_1+1)}{2}
\end{equation}
which is precisely the order of the main term of $\Delta_{1/2}(x)$ in the explicit formula \eqref{eq: exp form Af} (see \Cref{thm: watson exp for Delta}). In particular, we formalize the definition of \emph{apparent bias} and the notion of oscillation in a ``roughly balanced manner'' introduced in \cite{martin2023fake}. Similar definitions also appear in the studies of related bias phenomena \cite{rubinstein1994chebyshev}.

\begin{defn}[Bias at the scale $x^{1/2}(\Log x)^{w-1}$] \label{def: bias }
Define the normalized quantity
\begin{equation}\label{eq: Bf def}
B_{\mathfrak{f}}^{\exp}(x;z,w)
:=\frac{A_{\mathfrak{f}}^{\exp}(x)-\Delta_1(x)}{x^{1/2}(\Log x)^{w-1}},
\qquad x\ge3,
\end{equation}
and its logarithmic mean
\begin{equation}
\langle B_{\mathfrak{f}}^{\exp}\rangle_{\log}(X)
:=\frac{1}{\Log X}\int_2^X \frac{B_{\mathfrak{f}}^{\exp}(t;z,w)}{t} dt.
\end{equation}
Let $b\in\C$ with $b\neq0$.
\begin{itemize}
\item We say $A_{\mathfrak{f}}^{\exp}$ has \emph{persistent (limiting) bias} $b$ if $\displaystyle \lim_{x\to\infty}B_{\mathfrak{f}}^{\exp}(x;z,w)=b$.
\item We say $A_{\mathfrak{f}}^{\exp}$ has \emph{apparent bias} $b$ if $B_{\mathfrak{f}}^{\exp}(x;z,w)$ is bounded, the pointwise limit of $B_{\mathfrak{f}}^{\exp}(x;z,w)$ as $x\to \infty$ does \emph{not} exist, and 
\begin{equation}
    \langle B_{\mathfrak{f}}^{\exp}\rangle_{\log}(X)\to b, \qquad X\to\infty.
\end{equation}
\item We say $A_{\mathfrak{f}}^{\exp}$ has \emph{no bias} at this scale if it has no nonzero persistent bias and no nonzero apparent bias.
\end{itemize}
\end{defn}

The bias constant at this scale comes from the leading Watson coefficient of $\Delta_{1/2}(x),$ appearing in \Cref{thm: watson exp for Delta}.
Write
\begin{equation}\label{eq: c12 def}
c_{1/2}(\mathfrak{f};z,w)
:= \frac{\sin\bigl(\pi(z+w)\bigr)}{\pi} 2^{-w+1} 
\Gamma(1-w) \lambda_{1/2,0}(\mathfrak{f};z,w),
\end{equation}
where $\lambda_{1/2,0}(\mathfrak{f};z,w):=\lambda_{1/2,0}$ is defined in \Cref{thm: watson exp for Delta}. In Section \ref{sec: cst c12}, we give an explicit formula for $c_{1/2}(\mathfrak{f};z,w)$.

To establish the bias results, we assume, in addition, the weak Mertens hypothesis (WMH), which asserts that 
\begin{equation}\label{eq:WMH1}
    \int_1^X\bigl(\frac{M(x)}{x}\bigr)^2 dx \ll \Log X,
\end{equation} 
where $M$ is the Mertens function defined in \eqref{mertens}. The main advantage of this hypothesis is that it implies a lower bound on the distance between consecutive zeros of $\zeta(s).$ In fact, we do not need the full strength of this hypothesis to prove the result (see Section \ref{sec: WMH}), but we shall assume it here for the sake of simplicity.

\begin{thm}[Bias at the scale $x^{1/2} (\Log x)^{w-1}$]\label{thm: bias theorem}
Assume RH, SZC, and the WMH \eqref{eq:WMH1}.
Assume also that $z,w\not \in\Z$, $-1<\Re z<1$, and $\Re(w)<1$. Then one has an expansion of the form
\begin{equation}\label{eq: Bf expan}
B_{\mathfrak{f}}^{\exp}(x;z,w)
=
c_{1/2}(\mathfrak{f};z,w)
+
(\Log x)^{-(z+w)}
\sum_{\rho}\frac{\Delta_\rho(x)}{x^{1/2}(\Log x)^{-(z+1)}}
+o(1), \qquad x \to \infty.
\end{equation}
Moreover, the sum over the zeros $\rho$ of $\zeta$ in \eqref{eq: Bf expan} is not identically
zero, and the following hold at the scale \eqref{eq:scale}.
\begin{enumerate}
\item If $\Re(z+w)>0$ and $c_{1/2}(\mathfrak{f};z,w)\neq0$, then $A_{\mathfrak{f}}^{\exp}$ has \emph{persistent bias} $c_{1/2}(\mathfrak{f};z,w)$.
\item If $\Re(z+w)=0$ and $c_{1/2}(\mathfrak{f};z,w)\neq0$, then $A_{\mathfrak{f}}^{\exp}$ has \emph{apparent bias} $c_{1/2}(\mathfrak{f};z,w)$.
\item In all remaining cases, there is no nonzero bias at this scale. In particular, if $c_{1/2}(\mathfrak{f};z,w)=0$ there is no nonzero persistent or apparent bias. And, if $\Re(z+w)<0$, then $B_{\mathfrak{f}}^{\exp}(x;z,w)$ is unbounded as $x\to\infty$.
\end{enumerate}
\end{thm}

In Section \ref{sec: cst c12}, we give an explicit criterion for $c_{1/2}(\mathfrak{f};z,w)=0.$

\subsection{Examples}\label{sec:examples}
We now illustrate the range of the class of multiplicative functions covered by our analytic and bias theory by describing several useful families of fake $\mu$'s. We also give some numerical illustrations of the bias phenomena. 
In each figure, we plot the complex-plane trajectory of the centered summatory function
$
B_{\mathfrak{f}}^{\exp}(x;z,w)-c_{1/2}(\mathfrak{f};z,w),
$
and we mark the origin by $\times$.
\begin{enumerate}
\item \emph{Completely multiplicative.}
Fix $\xi\in\mathbb{S}^1$ and take $\varepsilon_k=\xi^k$ for $k\ge0$.  Then
$\mathfrak{f}$ is completely multiplicative with $\mathfrak{f}(n)=\xi^{\Omega(n)}$ and the Dirichlet series is
\begin{equation}
F_{\xi}(s):=F_{\mathfrak{f}}(s) =\sum_{n\ge1}\frac{\xi^{\Omega(n)}}{n^s} =\prod_p\frac{1}{1-\xi p^{-s}}, \qquad \Re(s)>1.
\end{equation}

\smallskip
This family contains several interesting cases.

\smallskip
\begin{itemize}
\item If $\xi=1$, we have $\mathfrak{f}(n)\equiv 1$ and $F_{\xi}(s)=\zeta(s)$.

\item If $\xi=-1$, we have $\mathfrak{f}(n)=(-1)^{\Omega(n)}=\lambda(n)$ (Liouville).

\item If $\xi$ is an $m$th root of unity, we obtain the generalized Liouville functions. It is easy to observe that $\xi^{\Omega(n)}$ depends only on $\Omega(n)\ (\mathrm{mod}\ m)$. By Fourier inversion modulo $m,$ the partial sums of these weights encode the distribution of $\Omega(n)$ among residue classes modulo $m$ (for details, see, e.g., \cite{coons2011residue}). Using the cyclotomic factorization
$\prod_{j=0}^{m-1}(1-\xi^j x)=1-x^m$, we have
\begin{equation}
\prod_{j=0}^{m-1} F_{\xi^j}(s) =\prod_p\prod_{j=0}^{m-1}\frac{1}{1-\xi^j p^{-s}} =\prod_p\frac{1}{1-p^{-ms}} =\zeta(ms).
\end{equation}
\end{itemize}
\begin{figure}[!ht]
    \centering
    \includegraphics[width=0.9\linewidth]{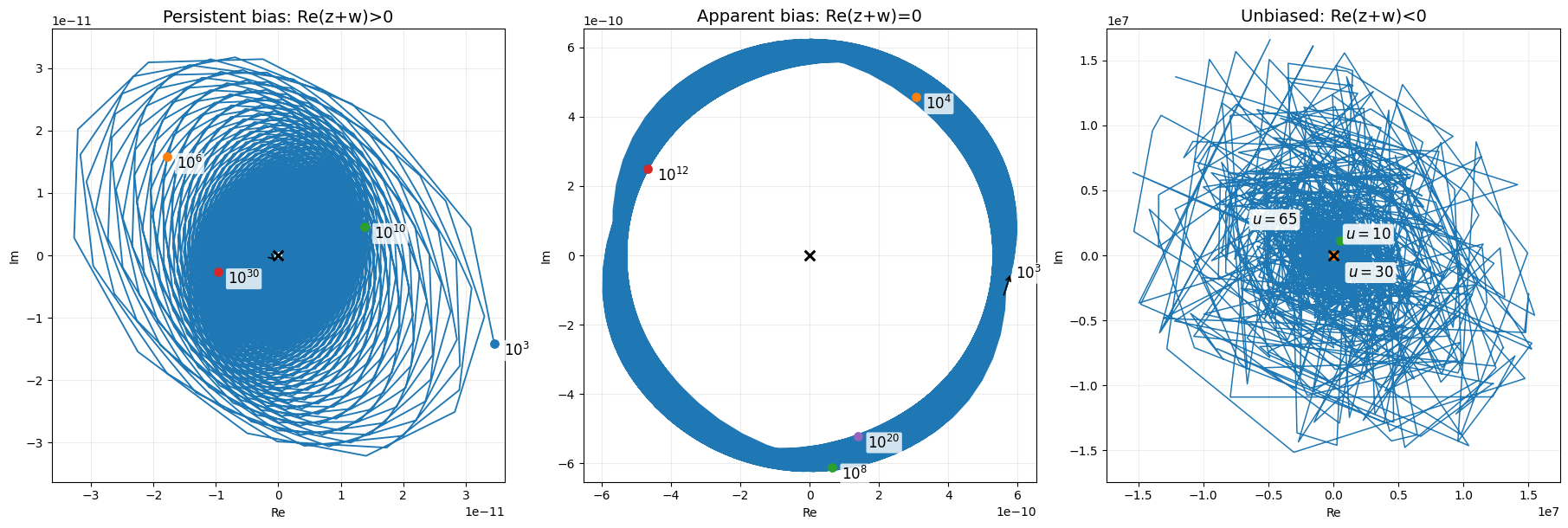}
    \caption{(Completely multiplicative $\mathfrak{f}):$ Take $\varepsilon_k=e^{ik\theta}$ for all $k\ge0$. Shown are the trajectories corresponding to the three angles $\theta\in\{\pi/5,\ \pi/3,\ 2\pi/3\}$, with $\Re(z+w)=0.5590,\ 0,\ -0.5$, illustrating nonzero persistent bias, apparent bias, and no bias, respectively.}
\end{figure}

\item \emph{Periodic $\varepsilon_k$.}
If $\varepsilon_{k+m}=\varepsilon_k$ for $k\ge1$, then
\begin{equation}
g_{\mathfrak{f}}(u)=1+\frac{\varepsilon_1u+\cdots+\varepsilon_mu^m}{1-u^m},
\end{equation}
so each Euler factor is rational in $p^{-s}$. Consider the following examples. 
\begin{itemize}
\item Let $m\ge2$ and set
\begin{equation}
    \varepsilon_k=
    \begin{cases}
    1,& m\mid k,\\
    0,& m\nmid k.
    \end{cases}
\end{equation}
This is periodic with period $m$ since $m \mid k$ iff $m \mid (k+m).$ Here, $\mathfrak{f}(n)=1$ if $n$ is a perfect $m$th power, and $\mathfrak{f}(n)=0$ otherwise. Thus, in this case,
\begin{equation}
F_{\mathfrak{f}}(s)=\prod_p\frac{1}{1-p^{-ms}}=\zeta(ms).
\end{equation}
\item Let $\chi$ be a Dirichlet character modulo $m$, and set
\begin{equation}
    \varepsilon_0=1,\qquad \varepsilon_k=\chi(k),\qquad k\ge1.
\end{equation}
Then we have an Euler product whose prime powers are twisted by $\chi$
\begin{equation}
F_{\mathfrak{f}}(s)= \prod_p \bigl(1+\sum_{k\ge1}\chi(k)p^{-ks} \bigr).
\end{equation}
\item Take the sequence with period $2$, i.e. $\varepsilon_{k+2}=\varepsilon_k,$ with $(\varepsilon_1,\varepsilon_2):=(i,-i).$ Equivalently, for every prime $p$ and every $k\ge1$, define
\begin{equation}
\mathfrak{f}(p^k):=
\begin{cases}
i,& k\ \text{odd},\\
-i,& k\ \text{even}.
\end{cases}
\end{equation}
Then 
\begin{equation}
    F_{\mathfrak{f}}(s)=\zeta(2s)\prod_p\Bigl(1+i p^{-s}-(1+i)p^{-2s}\Bigr).
\end{equation}

\begin{figure}[!ht]
    \centering
    \includegraphics[width=0.35\linewidth]{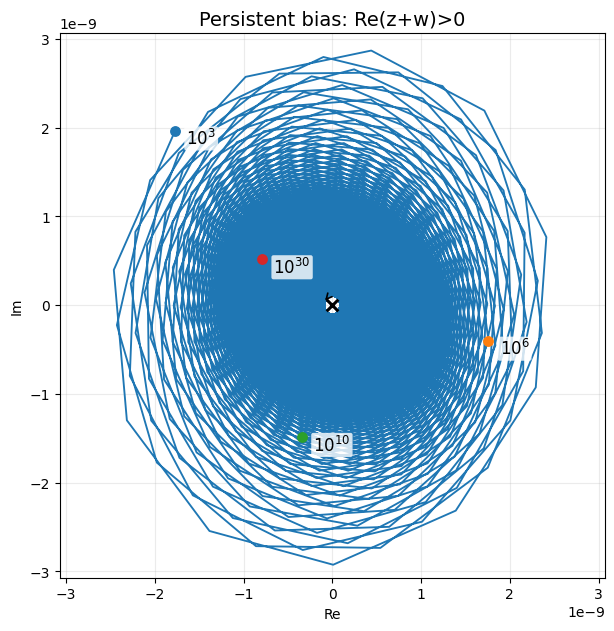}
    \caption{(Periodic $\varepsilon_k):$ Take the $2$-periodic sequence $\varepsilon_{2\ell+1}=i$ and $\varepsilon_{2\ell+2}=-i$, so that $\Re(z+w)=\tfrac12>0$. Hence $A_{\mathfrak{f}}^{\exp}$ has persistent bias \protect\footnotemark.}
\end{figure}
\end{itemize}
\footnotetext{Other behaviors are also possible in this family of fake $\mu$'s.}

\item \emph{Finitely supported $\varepsilon_k$.}
If $\varepsilon_k=0$ for all $k>m$, then $g_\mathfrak{f}$ is a polynomial of degree $m$ and
\begin{equation}
F_{\mathfrak{f}}(s)=\prod_{p}\Bigl(1+\varepsilon_1 p^{-s}+\cdots+\varepsilon_m p^{-ms}\Bigr).
\end{equation}
Interesting examples include the following.
\begin{itemize}
    \item If $m=1$, $\varepsilon_1=-1$ gives $F_{\mathfrak{f}}(s)=1/\zeta(s)$ and $\mathfrak{f}$ is the classical M\"obius function.
    \item Fix $r\ge2$ and $\xi\in\mathbb{S}^1$, and take
    \begin{equation}
        \varepsilon_k=
        \begin{cases}
        \xi^k, & 0\le k\le r-1,\\
        0, & k\ge r.
        \end{cases}
    \end{equation}
    Then
    \begin{equation}
        \mathfrak{f}(n)=
        \begin{cases}
        \xi^{\Omega(n)},& n\ \text{$r$-free},\\
        0,& \text{otherwise},
        \end{cases}
    \end{equation}
    where $n$ is called $r$-free if no prime $p$ satisfies $p^r \mid n.$ This is Tanaka's generalized M\"obius function $\mu_r$ \cite{tanaka1980mobius}. In this case,
    \begin{equation}
    F_{\mathfrak{f}}(s)= \prod_p \bigl( 1+\xi p^{-s}+\xi^2p^{-2s}+\cdots+\xi^{r-1}p^{-(r-1)s}\bigr)= \prod_p \frac{1-\xi^r p^{-rs}}{1-\xi p^{-s}}.
    \end{equation}
\begin{figure}[!ht]
    \centering
    \includegraphics[width=0.9\linewidth]{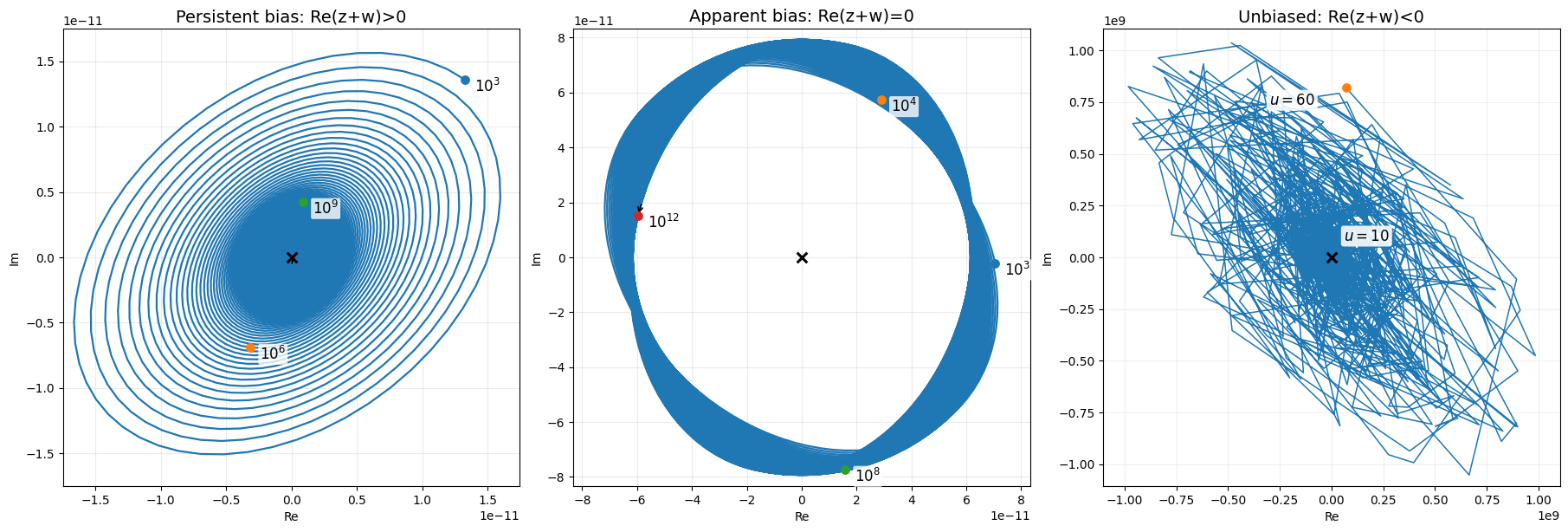}
    \caption{(Finitely supported $\varepsilon_k):$ Take
    $\varepsilon_1=e^{i\pi/5}$, $\varepsilon_2=\xi_2$, and $\varepsilon_k=0$ for $k\ge3$.
    Shown are the trajectories corresponding to $\xi_2\in\{1, -\tfrac14+i\sqrt{15}/4, -1\}$, with $\Re(z+w)=1.25,\ 0,\ -0.75$,
    illustrating nonzero persistent bias, apparent bias, and no bias, respectively.}
\end{figure}

\end{itemize}

\end{enumerate}

\subsection{Parameter ranges} \label{sec: parameter ranges}

The parameters $(z,w)$ in \eqref{eq: z w param } are determined by the first two coefficients $(\varepsilon_1,\varepsilon_2)$.
Since $\varepsilon_1\in\mathbb{S}^1\cup\{0\}$, we have $|z|= |\varepsilon_1|\le 1$ and therefore 
\begin{equation}\label{eq:rez}
    \Re(z)\in[-1,1].
\end{equation}
Moreover,
\begin{equation}
w=\varepsilon_2-\frac{\varepsilon_1(\varepsilon_1+1)}{2},
\end{equation}
so since $|\varepsilon_2|\le1,$ we have
\begin{equation}
    \Re(w)\le 1-\frac{1}{2} \Re(\varepsilon_1^2+\varepsilon_1).
\end{equation}
Thus, assuming $\varepsilon_1 \neq0,$ writing $\varepsilon_1=e^{i\theta},$ and setting $c:=\cos\theta\in[-1,1]$, we get
\begin{equation}
    \Re(\varepsilon_1^2+\varepsilon_1)=\cos(2\theta)+\cos\theta=(2c^2-1)+c,
\end{equation}
so that
\begin{equation}
    \Re(w)\le 1-\frac{2c^2+c-1}{2}
=\frac32-c^2-\frac{c}{2}
=\frac{25}{16}-\Bigl(c+\frac{1}{4}\Bigr)^2
\le \frac{25}{16}.
\end{equation}
The upper bound is achieved when $\varepsilon_2=1$ and
 $\cos\theta=-\frac{1}{4}$.

On the other hand, 
\begin{equation}
    |w|=|\varepsilon_2-\frac{\varepsilon_1(\varepsilon_1+1)}{2}| \le 2,
\end{equation}
so $\Re(w) \ge -2,$ which is attained when $\varepsilon_1=1$ and $\varepsilon_2=-1.$

Therefore, 
\begin{equation}\label{eq:rew}
    \Re(w) \in[-2, \frac{25}{16}].
\end{equation}

In this paper, we focus on the subset
\begin{equation}
z,w \not \in \Z, \qquad -1< \Re(z)< 1,\qquad \Re(w) < 1,
\end{equation}
of the parameter set given by \eqref{eq:rez} and \eqref{eq:rew}. The remaining cases require a different analytic method and will be discussed in a future paper.

\subsection{Organization of the paper}

\begin{itemize}
    \item Section \ref{sec: zeta z} presents estimates, branch conventions, and local factorizations of complex powers of $\zeta(s)$.
    \item Section \ref{sec: fake mus} presents the definition of unimodular fake M\"obius functions, the Euler product factorization
\eqref{eq: z w param }, and a uniform bound for $G_{\mathfrak{f}}(s)$ in a right-half plane.
    \item Section \ref{sec: perron form} presents Perron's formula, the contour of integration, and the explicit formula of the summatory function $A_{\mathfrak{f}}^{\exp}$. 
    \item Section \ref{sec: proofs} presents the proofs of the main theorems.
\end{itemize}

\section{Complex powers of $\zeta(s)$} \label{sec: zeta z}

Throughout this section, write $s=\sigma+it$ and let $\Log$ denote the principal branch on $\C\setminus(-\infty,0]$. We assume the Riemann hypothesis (RH) and the simplicity of the zeros of $\zeta(s)$ (SZC), and write $\rho=\tfrac12+i\gamma$ for a nontrivial zero.

Fix $z \in \C.$ In this section, we study the analytic continuation of $\zeta(s)^z$ and its local factorizations near its singularities. We then establish some standard uniform bounds for $\zeta(s)^z$ away from its singularities.

\subsection{Definition and analytic continuation} \label{sec: zeta z analy cont}

For $\sigma>1$, define
\begin{align}
  \zeta(s)^{z} &:= \prod_{p}\bigl(1-p^{-s}\bigr)^{-z} \label{eq:zetaz rhp} \\
  &= \prod_{p}\left\{1+\sum_{\nu\ge1} \binom{z+\nu-1}{\nu} p^{-\nu s} \right\}, \qquad \sigma>1,
\end{align}
where the infinite product converges absolutely and locally uniformly on $\sigma>1$. See, for example, \cite[Sec. 5.1]{tenenbaum2015introduction}.

Let
\begin{equation} \label{eq: omega'}
    \Omega':= \bigl\{ \Re(s)>1/3 \bigr\} \setminus \bigcup_{\rho} (-\infty+i \Im\rho,\ \rho],
\end{equation}
and
\begin{equation}
  \Omega :=  \bigl\{ \Re(s)>1/3 \bigr\} \setminus \Bigl( (-\infty,1] \cup \bigcup_{\rho}(-\infty+i \Im\rho,\ \rho]\Bigr),
\end{equation}
where the unions are taken over the nontrivial zeros $\rho$ of $\zeta(s)$ and
\begin{equation}
  (-\infty+i \Im\rho,\ \rho]
  :=\{ t+i \Im\rho : -\infty<t\le\tfrac12 \}.
\end{equation}
Then $\zeta(s)$ is holomorphic and nonvanishing on $\Omega,$ and hence it admits a single-valued holomorphic logarithm there. The lower bound $\Re(s)>1/3$ is not essential here, but it will be convenient later in the paper.

We fix the branch by the normalization $\log\zeta(2)=\ln\zeta(2)$ and define, for $s\in\Omega$,
\begin{equation} \label{eq: log zeta and zeta z}
\log\zeta(s):=\ln\zeta(2)+\int_{2}^{s}\frac{\zeta'(\alpha)}{\zeta(\alpha)} d\alpha,
\qquad
\zeta(s)^z:=\exp \bigl(z \log\zeta(s)\bigr),
\end{equation}
where the integral is taken along any path in $\Omega$ from $2$ to $s$. If $z\not \in\Z$, then $\zeta(s)^z$ has branch points at the pole $s=1$ and at zeros $\rho$ of $\zeta(s)$.

\begin{remark}
It is easy to show that the two definitions \eqref{eq:zetaz rhp} and \eqref{eq: log zeta and zeta z} of $\zeta(s)^z$ agree on $\sigma>1$.
\end{remark}

Similarly, the function $(s-1)\zeta(s)$ is holomorphic and nonvanishing on $\Omega'$, so
we can choose a single-valued holomorphic logarithm of it.

\begin{defn}\label{def:LZ}
On $\Omega'$ we define $\mathrm{L}_1(s)$ to be the unique holomorphic logarithm of $(s-1)\zeta(s)$ with the normalization $\mathrm{L}_1(2):= \ln \zeta(2).$ That is,
\begin{equation}
        \mathrm{L}_1(s):= \ln \zeta(2)  + \int_2^s  \frac{ \bigl( (\alpha-1) \zeta(\alpha) \bigr)'}{ (\alpha-1)\zeta(\alpha)}  d \alpha,
\end{equation}
where the integral is taken over any path from $2$ to $s$ in $\Omega'.$

We also define
\begin{equation}
  Z(s;z):= \frac{\exp\bigl(z \mathrm{L}_1(s)\bigr)}{s}, \qquad s\in\Omega'.
\end{equation}
\end{defn}

\begin{remark}
    Since $\mathrm{L}_1(s)= \ln \bigl( (s-1)\zeta(s)\bigr)$ for $s \in (1,2]$ and $(s-1)\zeta(s) \to 1$ as $s \to1^+,$ we have $\mathrm{L}_1(1)=0$.
\end{remark}

\begin{defn}
    For $\alpha \in \C,$ we define 
\begin{equation}
  (s-1)^{\alpha} := \exp\bigl(\alpha \Log(s-1)\bigr),
  \qquad s\in \C \setminus (-\infty, 1],
\end{equation}
where $\Log$ denotes the principal branch on $\C\setminus(-\infty,0]$.
\end{defn}

\begin{prop}[Analytic continuation] \label{zetasdef}
For $s\in\Omega$,
\begin{equation}\label{eq: zetazdef}
\zeta(s)^z  =  s (s-1)^{-z} Z(s;z)= (s-1)^{-z}  \exp\Bigl(z  \mathrm{L}_1(s)\Bigr).
\end{equation}
We refer to this as the factorization of $\zeta(s)^z$ into its singular and holomorphic parts near $s=1.$

\emph{Proof.}  The identity holds on the line $\{s>1\}$ and the right hand side is holomorphic on $\Omega$, so it extends by the identity theorem. \qed
\end{prop}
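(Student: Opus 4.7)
The plan is to check the claimed identity on the real ray $(1,\infty)$, where both sides collapse to elementary real expressions, and then to propagate it to all of $\Omega$ by the identity theorem. Note first that the two right-hand forms in \eqref{eq: zetazdef} are tautologically equal from the definition $Z(s;z)=\exp(z\,\mathrm{L}_1(s))/s$, so the substance of the claim is the single identity $\zeta(s)^z=(s-1)^{-z}\exp\bigl(z\,\mathrm{L}_1(s)\bigr)$ on $\Omega$.

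My first step would be to verify that the right-hand side is holomorphic on $\Omega$. The factor $(s-1)^{-z}=\exp(-z\,\Log(s-1))$ is holomorphic on $\C\setminus(-\infty,1]$, which contains $\Omega$ by Definition~\ref{def:Omega}, and $\mathrm{L}_1$ is holomorphic on $\Omega'\supset\Omega$ by construction, so the product is holomorphic on the open, connected set $\Omega$. Since $\zeta(s)^z$ is already holomorphic on $\{\sigma>1\}\subset\Omega$ via the absolutely convergent Euler product \eqref{eq:zeta-z-euler}, the task reduces to matching two holomorphic functions on a set with an accumulation point in $\Omega$.

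Next I would check the equality on $(1,\infty)$. For real $s>1$ one has $\zeta(s)>0$ and the Euler product gives $\zeta(s)^z=\exp\bigl(z\,\log\zeta(s)\bigr)$ with $\log\zeta(s)\in\R$. On the other hand, the normalization $\mathrm{L}_1(s)\in\R$ for $s>1$, together with the fact that $\mathrm{L}_1$ is a holomorphic logarithm of $(s-1)\zeta(s)$ on $\Omega'$, forces $\mathrm{L}_1(s)=\log(s-1)+\log\zeta(s)$ on the ray. Since also $(s-1)^{-z}=\exp(-z\log(s-1))$ there, the two contributions combine to give
\[
(s-1)^{-z}\exp\bigl(z\,\mathrm{L}_1(s)\bigr)=\exp\bigl(z\log\zeta(s)\bigr)=\zeta(s)^z \qquad (s>1).
\]

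Finally, the identity theorem, applied to the connected open set $\Omega$ on which both sides are holomorphic and which contains the accumulation set $(1,\infty)$, upgrades the identity to all of $\Omega$. The only delicate step — and essentially the only one that needs care — is the identification $\mathrm{L}_1(s)=\log(s-1)+\log\zeta(s)$ on $(1,\infty)$; this follows from the integral definition of $\mathrm{L}_1$ (whose integrand equals the ordinary real logarithmic derivative of $(s-1)\zeta(s)$ on the ray), combined with the initial value $\mathrm{L}_1(2)=\Log\zeta(2)=\log\zeta(2)$ and the positivity of $(s-1)\zeta(s)$ on $(1,\infty)$, which together pin down the constant of integration.
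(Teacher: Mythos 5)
Your proposal is correct and follows exactly the paper's (one-line) argument: verify the identity on the real ray $s>1$, where $\mathrm{L}_1(s)=\log(s-1)+\log\zeta(s)$ by the integral definition and the normalization at $s=2$, and then extend to $\Omega$ by the identity theorem since both sides are holomorphic there. You have merely filled in the details the paper leaves implicit, so nothing further is needed.
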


\begin{remark}
    The factorization in Proposition \ref{zetasdef} also follows from
    \begin{equation} \label{eq: L1 def.}
        \mathrm{L}_1(s)= \ln \zeta(2)  + \int_2^s  \frac{ \bigl( (u-1) \zeta(u) \bigr)'}{ (u-1)\zeta(u)}  d u= \Log(s-1) + \log \zeta(s).
    \end{equation}
\end{remark}

\begin{remark}[Complex powers of $\zeta(2s)$]\label{rem:zeta2s}
Let
\begin{equation} \label{eq: omega2}
  \Omega_2 := \{ s\in\C : 2s\in\Omega \},\qquad
  \Omega_2' := \{ s\in\C : 2s\in\Omega' \}.
\end{equation}
For $w\in\C$ and $s\in\Omega_2$ we similarly define
\begin{equation}
  \zeta(2s)^{w}
  := 2s (2s-1)^{-w} Z(2s;w),
\end{equation}
and $Z(2s;w)$ is holomorphic on $\Omega_2'.$
\end{remark}

\subsection{Local factorization of $\zeta(s)^z$ near $\rho$} 
Fix a zero $\rho=\tfrac12+i \gamma$ of $\zeta(s)$. Since $\zeta(s)^z$ has a branch point at $\rho$ when $z\not \in\Z$, it is convenient to factor it locally near $\rho$ into a singular and holomorphic factor. 

Fix a disk $D_\rho$, centered at $\rho$, chosen small enough so that it does not contain any other zero of $\zeta(s).$ Let
\begin{equation}
  \Gamma_\rho := \{\rho-t:\ t\ge0\}.
\end{equation}  

Fix $s_\rho \in D_\rho \setminus \Gamma_\rho$. On $D_\rho,$ we define
\begin{equation}
    L_\rho(s ):= \mathrm{L}_1(s_\rho) -  \Log(s_\rho-\rho) + \int_{s_\rho}^{s}
       \bigl(
         \frac{1}{u-1} + \frac{\zeta'(u)}{\zeta(u)} - \frac{1}{u-\rho}
       \bigr) du.
\end{equation}

We note that $L_\rho(s )$ is a holomorphic logarithm of the function $\frac{(s-1)  \zeta(s)}{s-\rho}$ on $D_\rho.$ Moreover, it is easy to see from the definition of $L_1'(s)$ in \eqref{eq: L1 def.} that $L_\rho$ has the property

\begin{equation}\label{eq: Lrho}
  L_\rho(s) = \mathrm{L}_1(s) -  \Log(s-\rho), \qquad s\in D_\rho\setminus\Gamma_\rho.
\end{equation}

\begin{corollary}[Local factorization of $\zeta(s)^{z}$ near $\rho$]
\label{cor:localform}
For $s\in D_\rho \setminus\Gamma_\rho$ we have
\begin{equation} \label{eq: zeta z near rho}
  \zeta(s)^{z}
  = (s-1)^{-z} (s-\rho)^{ z} Z_\rho(s;z),
\end{equation}
where
\begin{equation}
  Z_\rho(s;z) := \exp\bigl(z L_\rho(s)\bigr)
\end{equation}
is holomorphic and never zero on $D_\rho$.
\end{corollary}

\begin{proof}
The claim follows from \eqref{eq: zetazdef} and \eqref{eq: Lrho}.
\end{proof}

\subsection{Bounds for $\zeta(s)^z$}

In this section, we present some standard bounds for $\zeta(s)^z$ that will be needed later. For the purposes of our contour estimates, subexponential bounds would already suffice. Nevertheless, we state the sharper available polynomial and subexponential bounds in the relevant regions.

We first recall that it is standard that there exist arbitrarily large $T>15$ such that 
\begin{equation}\label{eq: T lowe bd}
  |T-\gamma|  \gg  \frac{1}{\log T} \qquad\text{for every zero } \rho=\tfrac12+i\gamma,
\end{equation}
see, for example, \cite[p. 126]{gonek1984mean}. 

\begin{lemma}\label{lem:zeta z bounds}
Assume RH and SZC. Let $A>0$.
\begin{enumerate}
\item If $\sigma_0>1/2$, then for every $\varepsilon>0$,
\begin{equation}
\zeta(\sigma+it)^z\ll_{\sigma_0,A}(1+|t|)^\varepsilon
\end{equation}
uniformly for $\sigma\ge\sigma_0$, $|t|\ge3$, and $|z|\le A$.

\item Assume $T>15$ is a real number satisfying \eqref{eq: T lowe bd}. Then there exists a constant $C>0$ depending only on $A$ such that
\begin{equation}
\zeta(\sigma+iT)^z\ll T^{C\Log\Log T},
\end{equation}
uniformly for $-1\le\sigma\le2$ and $|z|\le A$.

\item If $0<a<1/2$, then on the punctured line
\begin{equation}
V_a=\{a+it: |t|\ge2\}\setminus\{a+i\gamma:\zeta(\tfrac12+i\gamma)=0\},
\end{equation}
we have
\begin{equation}
\zeta(s)^z\ll_{a,A}|t|^{C_{a,A}}.
\end{equation}
\end{enumerate}
\end{lemma}

\begin{proof}
The first estimate follows from the uniform bound
$\log\zeta(\sigma+it)=o(\log |t|)$ as $|t| \to \infty,$ valid in any fixed half-planes
$\sigma\ge\sigma_0>1/2$ (see \cite[Notes to Ch. XIV, Eq. (14.33)]{titchmarsh1986theory} and \cite[Eq. (5.10)  p. 280]{tenenbaum2015introduction}). The second and third follow from the standard estimate \cite[Thm. 9.6(B)]{titchmarsh1986theory} 
\begin{equation}
\log\zeta(s)=\sum_{|t-\gamma|\le1}\Log(s-\rho)+O(\log |t|),
\end{equation}
valid uniformly for $-1\le\sigma\le2$ when $t$ is not a zero ordinate. Indeed, for $|t-\gamma|\le1$ we use
\begin{equation}
    (\Log T)^{-1} \ll  |s-\rho| \le 3, \qquad 1/2 -a \le |s-\rho|\le  \sqrt{(1/2 -a)^2 +1},
\end{equation}
for the second and third estimates, respectively. The claims then follow by the Riemann-von Mangoldt formula \cite[Ch. 9]{titchmarsh1986theory}. 
\end{proof}

\section{Unimodular fake M\"obius functions} \label{sec: fake mus}
\subsection{Definition}
Let $(\varepsilon_k)_{k\ge1}$ be a fixed sequence of complex numbers with
\begin{equation}
  \varepsilon_k \in \mathbb{S}^1\cup \{0\} \qquad k\ge1,
\end{equation}
where $\mathbb{S}^1$ is the unit circle. We make the convention $\varepsilon_0:=1.$

\begin{defn}[Fake $\mu$'s] \label{def: fakemus}
The \emph{fake M\"obius function} $\mathfrak{f}$ corresponding to $(\varepsilon_k)_{k\ge1}$ is the multiplicative function
\begin{equation}
  \mathfrak{f}:\mathbb{N}\longrightarrow\C
\end{equation}
defined on prime powers by
\begin{equation}
  \mathfrak{f}(p^k) := \varepsilon_k
  \qquad p\ \text{prime}, \quad k\ge1.
\end{equation}
In other words, if $n = \prod_p p^{v_p(n)}$, we define
\begin{equation}
  \mathfrak{f}(n) := \prod_p \varepsilon_{v_p(n)}.  
\end{equation}
\end{defn}

\begin{remark}
If we take 
\begin{equation}
    \varepsilon_1=-1, \qquad \varepsilon_k=0 \quad \forall k\ge2,
\end{equation}
then $\mathfrak{f}(n)$ coincides with the classical M\"obius function $\mu(n)$, hence the name fake $\mu$'s. 
\end{remark}
\begin{remark}
Definition \ref{def: fakemus} generalizes the definition introduced in \cite{martin2023fake} for $(\varepsilon_k)_{k\ge1} \subseteq \{0, \pm 1\}.$
\end{remark}
\subsection{The Dirichlet series of fake $\mu$'s}
Given a fake mu $\mathfrak{f}$,  the associated Dirichlet series is
\begin{equation}
  F_{\mathfrak{f}}(s):=  \sum_{n\ge1}\frac{\mathfrak{f}(n)}{n^{s}}, \qquad \Re(s)>1.
\end{equation}
Since $|\mathfrak{f}(n)|\le1$, the series converges absolutely in $\Re(s)>1$. Thus, by absolute convergence of $F_{\mathfrak{f}}(s)$ and multiplicativity of $\mathfrak{f}$, we have the Euler product representation 
\begin{equation}\label{eq: Euler prod Ff}
  F_{\mathfrak{f}}(s)= \prod_{p}\Bigl(1+\varepsilon_1 p^{-s}+\varepsilon_2 p^{-2s}+\varepsilon_3 p^{-3s}+\cdots\Bigr), \qquad \Re(s)>1,
\end{equation}
where the product runs over all primes. 

For analytic continuation beyond $\Re(s)>1$, it is convenient to factor $F_{\mathfrak{f}}(s)$ using complex powers of $\zeta(s)$. Indeed, for $|u|<1$ and $z,w$ in a bounded disk in $\C$, we have the binomial expansion
\begin{equation}\label{eq: f(u) def}
  f(u):=(1-u)^{-z}(1-u^2)^{-w} = 1 + z u + \Bigl(\frac{z(z+1)}{2}+w\Bigr)u^2 + O(u^3).
\end{equation}
Thus, if we choose 
\begin{equation}\label{eq: z w defin}
  z := \varepsilon_1,\qquad
  w := \varepsilon_2 - \frac{\varepsilon_1(\varepsilon_1+1)}{2},
\end{equation}
then the first three coefficients in \eqref{eq: f(u) def} match those of the power series
\begin{equation}
  1+\varepsilon_1 u+\varepsilon_2 u^2+  \varepsilon_3 u^3 +\cdots,
\end{equation}
and we obtain
\begin{equation}\label{eq: f(u) up to 3}
  f(u) = 1+\varepsilon_1 u+\varepsilon_2 u^2 + O(u^3).
\end{equation}

Since $|\varepsilon_j|\le 1$, we know that the power series
\begin{equation}\label{eq: g(u) def}
  g_{\mathfrak{f}}(u):= \sum_{j=0}^\infty \varepsilon_j u^j = 1+\varepsilon_1 u+\varepsilon_2 u^2+\varepsilon_3 u^3+\cdots
\end{equation}
converges absolutely for $|u|<1$.

Now for $\sigma>1,$ we may take $u:= p^{-s}$ and obtain, for each prime $p$, that
\begin{equation}\label{ps}
  1+\varepsilon_1 p^{-s}+\varepsilon_2 p^{-2s}+\varepsilon_3 p^{-3s}+\cdots = (1-p^{-s})^{-z}(1-p^{-2s})^{-w} G_p(s),
\end{equation}
where
\begin{equation}\label{eq: Gp def}
  G_p(s):= (1-p^{-s})^{z}(1-p^{-2s})^{w} g_{\mathfrak{f}}(p^{-s})
\end{equation}
is a holomorphic function on $\sigma>1.$

Consequently, by \eqref{eq: Euler prod Ff} and \eqref{ps}, we have
\begin{equation}
  F_{\mathfrak{f}}(s) = \zeta(s)^{z} \zeta(2s)^{w} G_\mathfrak{f}(s),
\end{equation}
where
\begin{equation}\label{eq:Gf def prod}
  G_\mathfrak{f}(s) := \prod_p G_p(s),    
\end{equation}
and $z,w$ are given by \eqref{eq: z w defin}. Moreover, $G_\mathfrak{f}$ satisfies the following properties.

\begin{prop}[Properties of $G_\mathfrak{f}$]
\label{prop: properties of Gf}
With the same notation as above, we have the following.
\begin{itemize}
    \item[(a)] $G_\mathfrak{f}(s)$ is holomorphic on $\sigma> \frac{1}{3}$.
    \item[(b)] If $\sigma_1>\frac{1}{3}$, then $G_\mathfrak{f}(s)$ is uniformly bounded on $\sigma \ge \sigma_1$.
\end{itemize}
\end{prop}

\begin{proof} 
Fix $\sigma_1>\frac{1}{3}$. Comparing \eqref{eq: f(u) up to 3} and \eqref{eq: g(u) def}, we see that
\begin{equation} 
    h(u) := g_{\mathfrak{f}}(u)-f(u)
\end{equation}
is holomorphic on $|u|<1$ and has a zero of order at least $3$ at $u=0.$ Hence
\begin{equation}
  h(u) = u^3 J(u)
\end{equation}
for some holomorphic function $J$ on $|u|<1$. Therefore, substituting 
\begin{equation}
    g_{\mathfrak{f}}(u) =(1-u)^{-z}(1-u^2)^{-w}+u^3J(u)
\end{equation}
into the definition of $G_p$ in \eqref{eq: Gp def}, we obtain 
\begin{equation}
  G_p(s) -1 = p^{-3s}J(p^{-s})(1-p^{-s})^z(1-p^{-2s})^w .  
\end{equation}
Thus, since $J(p^{-s})$, $(1-p^{-s})^z$, and $(1-p^{-2s})^w$ are uniformly bounded for $\sigma\ge\sigma_1,$
\begin{equation}\label{eq:Gp-1 bound}
  G_p(s) -1  \ll_{\mathfrak{f}, \sigma_1} p^{-3 \sigma_1}   
\end{equation}
uniformly on $\sigma \ge \sigma_1$. 

Thus, since $\sum_p p^{-3\sigma_1}<\infty$ (recall $\sigma_1>1/3$), it follows that the product $G_\mathfrak{f}(s)$ converges absolutely and locally uniformly on $\{\Re(s)>1/3\}$ and defines a holomorphic function there. This proves $(a)$.

For (b), using \eqref{eq:Gp-1 bound}, we know that there is a constant $C_{\sigma_1, \mathfrak{f}}>0$ such that $|G_p(s) -1| \le C_{\sigma_1, \mathfrak{f}}   p^{-3 \sigma_1}$ uniformly on $\sigma \ge \sigma_1$. In particular, $\sum_p |G_p(s) -1|<\infty$ uniformly on $\sigma \ge \sigma_1$, and on $\sigma \ge \sigma_1,$ we have
\begin{align}
  |G_\mathfrak{f}(s)|&=   \bigl| \prod_{p}G_p(s) \bigr| = \bigl|\prod_{p}(1+G_p(s)-1)\bigr| \\
  &\le \prod_{p}(1+|G_p(s)-1|) \le  \prod_{p} \exp(|G_p(s)-1|) \\
  &= \exp\Bigl(\sum_{p}|G_p(s)-1|\Bigr) \le \exp\Bigl(C_{\sigma_1, \mathfrak{f}} \sum_{p}p^{-3\sigma_1}\Bigr),
\end{align}
which is a finite constant depending only on $\sigma_1$ and $\mathfrak{f}$. This proves (b).
\end{proof}

\begin{remark}
    $G_\mathfrak{f}(s)$ may have zeros in the half-plane $\sigma> \frac{1}{3}$.
\end{remark}

\section{Perron formula and the contour}\label{sec: perron form}

For $x>3$ define the exponentially smoothed summatory function
\begin{equation}
A_{\mathfrak{f}}^{\exp}(x) :=\sum_{n\ge1}\mathfrak{f}(n)e^{-n/x}.
\end{equation}
By Mellin inversion \cite[Thms. 1.4 and 1.6]{galway2004analytic} and absolute convergence for $c>1$, we obtain
\begin{equation}\label{eq: perron exp}
A_{\mathfrak{f}}^{\exp}(x) =\frac{1}{2\pi i} \int_{c-i\infty}^{c+i\infty} F_{\mathfrak{f}}(s)\Gamma(s)x^s ds .
\end{equation}
It is convenient to use this smoothing since the exponential decay introduced by the factor $\Gamma(s)$ allows us to carry out our contour estimates. Indeed, Stirling's formula \cite[Eq. (4.12.2)]{titchmarsh1986theory} gives, for
$\sigma_1\le \sigma\le \sigma_2$ with $\sigma_1,\sigma_2 \in \R$ fixed, 
\begin{equation}\label{eq: stirling formula.}
\Gamma(\sigma+it) \ll |t|^{\sigma-1/2}e^{-\pi |t|/2}, \qquad\qquad |t|\to\infty .
\end{equation}

Fix
\begin{equation}
\frac13<a<\frac{1}{2},\qquad c>1.
\end{equation}
For the remainder of the paper, we will assume $T$ satisfies \eqref{eq: height T lower bd}, and in particular may be taken arbitrarily large. We truncate the line of integration in
\eqref{eq: perron exp} at height $T$ and shift it from $\Re(s)=c$ to $\Re(s)=a$ using the contour $\Gamma_T$ shown in Figure \ref{fig:contour}. We construct $\Gamma_T$ as a counterclockwise rectangle
with vertices $c\pm iT$ and $a\pm iT$, modified by Hankel contours $H(\xi)$ around the
branch points
\begin{equation}
\xi=1,\qquad \xi=\frac{1}{2},\qquad \xi=\rho
\quad (|\Im\rho|<T).
\end{equation}

\begin{figure}
    \centering
    \includegraphics[width=0.84\linewidth]{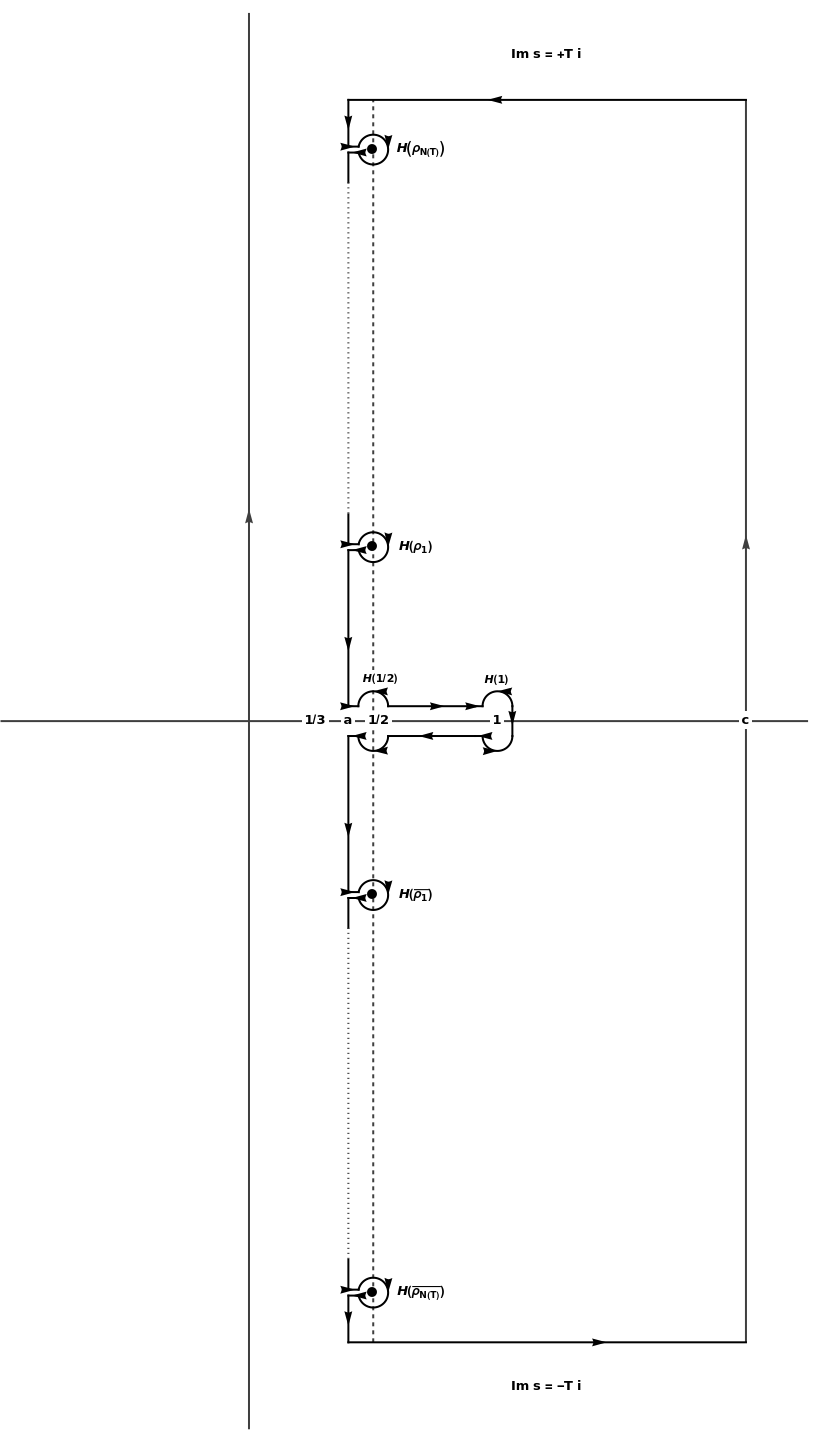}
    \caption{The contour $\Gamma_T$.}
    \label{fig:contour}
\end{figure}

We shall use the following notation for the kernel
\begin{equation} \label{eq: psi}
    \Psi(s;x):= \Gamma(s)   x^s, \qquad s \in \C \setminus \Z_{\le0} , \quad x\ge 1.
\end{equation}
Fix $r_0=r_0(T)>0$ small enough so that $r_0<\tfrac12-a$ and, for every zero $\rho=\tfrac12+i\gamma$ with $|\gamma|<T$, the disk $|s-\rho|<r_0$ contains no zero of $\zeta(s)$ other than $\rho$. Throughout, let $0<r<r_0.$ For each branch point $\xi \in \{1\} \cup \{\rho : |\gamma|<T\} ,$ write 
\begin{equation} \label{eq: H(xi) parts}
    H(\xi)=:I_{\xi,r}^+\cup C(\xi,r)\cup I_{\xi,r}^-,
\end{equation}
where $C(\xi,r)$ is the circular arc of radius $r$ centered at $\xi$, and $I_{\xi,r}^{\pm}$ are the corresponding horizontal segments \protect \footnote{Geometrically, $C(\xi,r)$ intersects the segments $I_{\xi,r}^{\pm}$ at the points $\xi- \frac{\sqrt{3}}{2}r \pm i\frac{r}{2},$ respectively.}  just above/below the branch cut at $\xi$, at a distance $r/2$. The segment $I_{\xi,r}^{+}$ is oriented left-to-right, while  $I_{\xi,r}^{-}$ is oriented right-to-left. Similarly, for $\xi=1/2,$ we write 
\begin{equation}
    H(1/2)=:I_{1/2,r}^+\cup  I_{1/2,r}^-.
\end{equation}

To account for the additional contributions coming from vertical segments on the line $\Re(s)=a$, we define
\begin{equation} \label{eq: V1/21}
    V_{1/2,1}:= \{a+i t: r/2 \le |t|\le r_0 \},
\end{equation}
\begin{equation} \label{eq: Vrho}
    V_\rho:= \{a+ i \Im(\rho) + i t: r/2 \le |t|\le r_0 \}.
\end{equation}
Then
\begin{equation}
    A_{\mathfrak{f}}^{\exp}(x)= I_c(x;T) + E_0(x;T),
\end{equation}
where 
\begin{equation}
  I_c(x;T):= \frac{1}{2\pi i}\int_{c-iT}^{c+iT} F_{\mathfrak{f}}(s)  \Psi(s;x)  ds,
\end{equation}
and
\begin{equation} \label{eq: E0}
    E_0(x;T)= \frac{1}{2\pi i}\int_{c-i \infty}^{c-iT}  F_{\mathfrak{f}}(s)   \Psi(s;x) ds + \frac{1}{2\pi i}\int_{c+i T}^{c+i \infty}  F_{\mathfrak{f}}(s)   \Psi(s;x) ds.
\end{equation}
On the other hand, by Cauchy's theorem, we have
\begin{equation}
    \int_{\Gamma_T} F_{\mathfrak{f}}(s)   \Psi(s;x)  ds=0.
\end{equation}
Thus, it follows that
\begin{align}
 A_{\mathfrak{f}}^{\exp}(x) &= \tilde \Delta_1(x)
  + \tilde \Delta_{1/2}(x) + \delta_{1/2,1}(x) +  \sum_{\rho :   |\Im(\rho)|<T} \Bigl(  \tilde \Delta_\rho(x)  + \delta_{\rho}(x)  \Bigr) \\
  & \quad+ E_0(x;T) + E_1(x; T) + E_2(x; T), \label{eq: Afm decomposition}
\end{align}
where, for $\xi\in\{1,\tfrac12\}\cup\{\rho:\ |\Im\rho|<T\}$, 
\begin{equation}\label{eq: tilde Delta xi def}
  \tilde \Delta_\xi(x) := -\frac{1}{2\pi i}\int_{H(\xi)} F_{\mathfrak{f}}(s)   \Psi(s;x) ds,
\end{equation}
\begin{equation}\label{eq: deltahalf1}
    \delta_{1/2,1}(x):=- \frac{1}{2\pi i} \int_{V_{1/2,1}} F_{\mathfrak{f}}(s)   \Psi(s;x) ds,
\end{equation}
\begin{equation}\label{eq: deltarho}
    \delta_{\rho}(x):=-\frac{1}{2\pi i} \int_{V_{\rho}} F_{\mathfrak{f}}(s)   \Psi(s;x) ds,
\end{equation}
\begin{equation} \label{eq: E1}
    E_1(x;T):= -\frac{1}{2\pi i} \bigl(\int_{c+iT}^{a+iT} F_{\mathfrak{f}}(s)  \Psi(s;x) ds + \int_{a-iT}^{c-iT} F_{\mathfrak{f}}(s) \Psi(s;x) ds \bigr), 
\end{equation}
and
\begin{equation} \label{eq: E2}
    E_2(x;T):= -\frac{1}{2\pi i}\int_{V_{a,T}} F_{\mathfrak{f}}(s)   \Psi(s;x) ds, 
\end{equation}
with 
\begin{equation} \label{def VaT}
V_{a,T} := \{s: \Re(s) = a,  |\Im s| \le T\} \setminus \Bigl(  V_{1/2,1}   \cup   \bigcup_{ |\gamma| \le T} V_{\rho} \Bigr).    
\end{equation}

\section{Proofs of the main results} \label{sec: proofs}

\subsection{Proof of Theorem \ref{thm: exp form Af}}
\subsubsection{The integrands} \label{sec: the integrands}
The integrands in Theorem \ref{thm: exp form Af} are as follows. 
\begin{equation}\label{J1}
\mathcal{J}_1(u;\mathfrak{f}):= (1-u)  Z(1-u;z) \zeta(2-2u)^{w}  G_{\mathfrak{f}}(1-u)   \Gamma(1-u),  
\end{equation}
\begin{equation}\label{J12}
\mathcal{J}_{1/2}(u;\mathfrak{f}):= \Bigl(\tfrac12-u\Bigr)^2 \Bigl(\tfrac12+u\Bigr)^{-z} Z\Bigl(\tfrac12-u;z\Bigr)  Z\bigl(1-2u;w\bigr)  G_{\mathfrak{f}}\Bigl(\tfrac12-u\Bigr)   \Gamma \Bigl(\tfrac12-u\Bigr),
\end{equation}
and for a zero $\rho$ of $\zeta(s),$
\begin{equation}\label{Jrho}
    \mathcal{J}_\rho(u;\mathfrak{f}):= (\rho-1-u)^{-z} G_{\mathfrak{f}}(\rho-u)  Z_\rho(\rho-u;z)  \zeta(2\rho-2u)^{w}  \Gamma(\rho-u).
\end{equation}

\subsubsection{The proof}
\begin{proof}
Fix $1/3<a<1/2$ and $c>1.$ We may assume, without loss of generality, that $c<2.$ For convenience, set
\begin{equation}
     \widetilde F(s;x):=  F_{\mathfrak{f}}(s)   \Psi(s;x).
\end{equation}
Recall the decomposition of $A_{\mathfrak{f}}^{\exp}(x)$ into the main and error terms in \eqref{eq: Afm decomposition}-\eqref{eq: E2}. It remains to prove the following two claims.
\begin{itemize}
  \item[(a)] There exists a constant $T_0=T_0(a,c,\mathfrak{f})>0$ such that for every $x \ge 3$ and every $T\ge T_0$ satisfying \eqref{eq: height T lower bd}, the error terms $E_0(x;T),E_1(x;T),$ and $E_2(x;T),$ given in \eqref{eq: E0}, \eqref{eq: E1}, and \eqref{eq: E2}, satisfy 
  \begin{equation}
      E_0(x;T)=O(x^c e^{-T}),\qquad E_1(x;T)=O(x^c e^{-T}),\qquad E_2(x;T)=O(x^a).
  \end{equation}
    Hence, if $T>(c-a)\Log x$ and $x$ is large, it follows that the total error $E_0(x;T)+E_1(x;T)+E_2(x;T)$ is $O(x^a).$
  \item[(b)] The main contributions, given in  \eqref{eq: tilde Delta xi def}, \eqref{eq: deltahalf1}, and \eqref{eq: deltarho}, add up to the Laplace integrals $\Delta_\xi(x)$ stated in Theorem \ref{thm: exp form Af}. In particular, we have 
  \begin{equation}
      \tilde \Delta_1(x) + \tilde \Delta_{1/2}(x) + \delta_{1/2,1}(x) = \Delta_1(x) +  \Delta_{1/2}(x),
  \end{equation}
  and for every zero $\rho$ with $|\Im \rho| < T$
  \begin{equation}
      \tilde \Delta_\rho(x) +\delta_\rho(x)= \Delta_\rho(x).
  \end{equation}
\end{itemize}

We begin by proving $(a).$ First, we will need a bound on the integrand $\widetilde F(s;x).$

\begin{lemma}\label{lem:F growth}
With the notation above, the following uniform estimates hold.

\begin{enumerate}
\item There exists $t_0=t_0(c,\mathfrak{f})>0$ such that, for every $x\ge 3$ and every real $t$ with
$|t|\ge t_0$,
\begin{equation} \label{eq: F growth 1}
    \widetilde F(c+it;x) \ll_{a,c,\mathfrak{f}} x^c e^{-|t|}.
\end{equation}

\item Recall the set $V_{a,T}$ in \eqref{def VaT}. For $s=a+it\in V_{a,T}$,
\begin{equation} \label{eq: F growth 2}
    \widetilde F(s;x) \ll_{a,\mathfrak{f}} x^a e^{-|t|}.
\end{equation}

\item There exists $T_0=T_0(a,c,\mathfrak{f})>0$ such that, for every $x\ge 3$ and every
$T\ge T_0$ satisfying \eqref{eq: height T lower bd},
\begin{equation} \label{eq: F growth 3}
    \widetilde F(\sigma\pm iT;x) \ll_{a,c,\mathfrak{f}} x^\sigma e^{-T}
\end{equation}
uniformly for $a\le \sigma\le c$.
\end{enumerate}
\end{lemma}

\begin{remark}
    The estimate \eqref{eq: F growth 1} is used to bound the contributions of the tails at the vertical line $\sigma=c,$ and \eqref{eq: F growth 2} is used to bound the contribution of the punctured vertical line $V_{a,T}$ at $\sigma=a.$ We use \eqref{eq: F growth 3} to estimate the contribution of the horizontal lines at height $\pm T.$
\end{remark}

\begin{proof}
    We first prove \eqref{eq: F growth 1} and \eqref{eq: F growth 2}. By Proposition \ref{prop: properties of Gf}, we know $G_{\mathfrak{f}}$ is uniformly bounded on $\Re(s)\ge a$, and thus bounding complex powers of $\zeta(s)$ using the estimates in Lemma \ref{lem:zeta z bounds}, we know that there exists a constant $B>0$ depending only on $a,c$ and $\mathfrak{f}$ such that
    \begin{equation}
        \zeta(s)^z \zeta(2s)^w G_{\mathfrak{f}}(s)  \ll_{a,c,\mathfrak{f}}  (1+|t|)^B,
    \end{equation}
    uniformly for $s=c+it$ with $|t| \ge 3$, and also for $s$ in $\{|t|\ge2\}  \cap  V_{a,T}$.

    Thus, choosing $t_0 \ge 3$ large enough (possibly depending on $B$) so that Stirling's formula \eqref{eq: stirling formula.} applies and the polynomial factor is controlled by the exponential decay, we obtain 
    \begin{align}
      \bigl|\widetilde F(s;x)\bigr| &= \bigl|\zeta(s)^z \zeta(2s)^w G_{\mathfrak{f}}(s) \Gamma(s) x^{s}\bigr|
      \ll x^\sigma (1+|t|)^{B} |t|^{\sigma - 1/2} e^{-\pi|t|/2} \nonumber \\
      &= x^\sigma (1+|t|)^{B} |t|^{\sigma - 1/2}  e^{-(\pi/2 -1)|t|} e^{-|t|} \ll x^\sigma e^{-|t|},
    \end{align}
    uniformly for $s=c+it$ with $|t| \ge t_0$, and uniformly for $s$ in $\{|t|\ge t_0 \}  \cap  V_{a,T}$. This proves \eqref{eq: F growth 1}. For \eqref{eq: F growth 2}, it remains to prove the required estimate inside the box $|t|\le t_0.$
    
    We show that $\zeta(s)^z \zeta(2s)^w G_{\mathfrak{f}}(s) \Gamma(s)$ is bounded on $\{|t| \le t_0\}\cap V_{a,T}$. Then $\widetilde F(s;x)= O(x^a)=O(x^a e^{-|t|})$ there, proving \eqref{eq: F growth 2}. Now $G_{\mathfrak{f}}(s) \Gamma(s)$ is continuous, hence bounded, on $\{a+it: |t| \le t_0\}$ and $\zeta(s)^z \zeta(2s)^w$ is continuous and bounded on the line $\sigma=a$ away from the branch cuts. Hence, it remains to bound $\zeta(s)^z \zeta(2s)^w$ near the points where the line $\sigma=a$ crosses the branch cuts, namely at $s=a$ and $s=a + i \gamma$ with $|\gamma|\le t_0,$ if any. 
    
    Since $1/3<a<1/2$, the local factorizations \eqref{zetasdef},
    \begin{equation}
      \zeta(s)^z = s(s-1)^{-z} Z(s;z),
      \qquad
      \zeta(2s)^w = 2s(2s-1)^{-w} Z(2s;w),
    \end{equation}
    show that the upper and lower boundary values at $s=a$ are finite. Indeed, 
    \begin{align}
      \lim_{r\to0^+}
        \zeta(a \pm i r/2)^z 
        \zeta\bigl(2a \pm ir\bigr)^w
      &= 2a^2 Z(a;z) Z(2a;w)  \lim_{r\to0^+}
           (a \pm i r/2-1)^{-z}
           \bigl(2a \pm ir-1\bigr)^{-w} \nonumber \\
      &= 2 a^2 Z(a;z) Z(2a;w)  e^{\mp i\pi(z+w)}  
         (1-a)^{-z}(1-2a)^{-w} \nonumber \\
      &= O_{a,\mathfrak{f}}(1).
    \end{align}
    Similarly, using the local factorizations of $\zeta(s)^z$ near the zeros of $\zeta(s)$, as in Corollary \ref{cor:localform}, the upper and lower limits of $\zeta(s)^z \zeta(2s)^w$ at each point $a+i \gamma$ with $|\gamma| \le t_0$, if any,  exist and are finite. This proves \eqref{eq: F growth 2}.

    Estimate \eqref{eq: F growth 3} is proved similarly. By Lemma \ref{lem:zeta z bounds} and Proposition \ref{prop: properties of Gf}, there exists a constant $C>0$ depending only on $a,c$ and $\mathfrak{f},$ such that 
    \begin{equation}
        \zeta(s)^z\zeta(2s)^wG_{\mathfrak{f}}(s) \ll_{a,c,\mathfrak{f}} T^{C\Log\Log T},
    \end{equation}
    uniformly for $s=\sigma \pm i T$ with $a\le\sigma\le c$. The last bound is subexponential of the form $e^{o(T)}$ as $T \to \infty.$ Thus, using Stirling's formula, we get the desired bound \eqref{eq: F growth 3} for large enough $T$ satisfying \eqref{eq: height T lower bd}.
\end{proof}

\begin{corollary}\label{errorsbound}
    Let $E_0,$ $E_1,$ and $E_2$ be as in \eqref{eq: E0}, \eqref{eq: E1}, and \eqref{eq: E2}. Then, with the above setup, for every $x\ge3$ and every $T\ge T_0$ satisfying \eqref{eq: height T lower bd}, we have
    \begin{equation}\label{errorsboundeq}
        E_0(x;T) = O\bigl(x^c e^{-T}\bigr),\qquad
        E_1(x;T) = O\bigl(x^c e^{-T}\bigr),\qquad
        E_2(x;T) = O\bigl(x^a \bigr),
    \end{equation}
    where the implied constants may depend on $a,c,\mathfrak{f}$, but not on $x$ or $T$.
\end{corollary}

\begin{proof}
By Lemma \ref{lem:F growth}, it follows that, for such $T\ge T_0$, we have
\begin{align}
  |E_0(x;T)|
  &\ll_{a,c, \mathfrak{f}}
    x^c\int_T^\infty e^{- t} dt   =x^c e^{-T} \int_T^\infty e^{- (t-T)} dt  \\
    &=x^c e^{-T} \int_0^\infty e^{- t} dt  \ll 
    x^c e^{-T},
\end{align}

\begin{align}
    |E_1(x;T)|&\ll  \int_a^c |\widetilde F(\sigma+iT;x)| d\sigma +\int_a^c |\widetilde F(\sigma-iT;x)| d\sigma \\
    &\ll_{a,c,\mathfrak{f}} e^{-T}\int_a^c x^\sigma d\sigma \ll_{a,c,\mathfrak{f}} x^c e^{-T},
\end{align}
and
\begin{align}
    |E_2(x;T)| &\ll \int_{\{t: a+it\in V_{a,T}\}} |\widetilde F(a+it;x)| dt  \\
    &\ll_{a,\mathfrak{f}} x^a \int_{-T}^{T} e^{-|t|} dt \ll x^a .
\end{align}
This completes the proof.
\end{proof}

This establishes $(a).$ We now turn to the proof of $(b).$ For a horizontal segment $I$, oriented left to right in the complex plane, we define
\begin{equation}
  \Delta_I(x)
:= -\frac{1}{2\pi i}\int_I
       \bigl(\widetilde F_+(s;x)-\widetilde F_-(s;x)\bigr) ds,  
\end{equation}
where
\begin{equation}
    \widetilde F_\pm(s;x):= \widetilde F(s \pm 0 i;x):= \lim_{\varepsilon \downarrow 0} \widetilde F(s \pm \varepsilon i;x), \qquad s\in I,
\end{equation}
denote the boundary values above and below the slit $I$.

We will show, by Cauchy's theorem, that the contributions of the Hankel contours $H(\xi)$ can be replaced by the corresponding branch cut contributions:

\begin{equation} \label{eq: b.1}
     \tilde \Delta_1(x) + \tilde \Delta_{1/2}(x) + \delta_{1/2,1}(x) = \Delta_{I_1}(x) +  \Delta_{I_{1/2}}(x)  
\end{equation}
and for every zero $\rho$ with $|\Im \rho|<T$,
\begin{equation} \label{eq: b.2}
\tilde \Delta_\rho(x) +\delta_\rho(x)= \Delta_{I_\rho}(x),
\end{equation}
where
\begin{equation}
   I_1=(\tfrac12,1),\qquad
I_{1/2}=(a,\tfrac12),\qquad
I_\rho=(a,\tfrac12)+i\Im\rho. 
\end{equation}

Recall $\tilde \Delta_\xi,$ $\delta_{1/2,1},$ and $\delta_\rho$ are defined in \eqref{eq: tilde Delta xi def}, \eqref{eq: deltahalf1}, and \eqref{eq: deltarho}. Once this last claim is established, we will then show that $\Delta_\xi(x)= \Delta_{I_\xi}(x)$ for every $\xi,$ where $\Delta_\xi(x)$ are the Laplace integrals appearing in Theorem \ref{thm: exp form Af}. This will thus prove $(b)$ and hence the theorem.

To prove \eqref{eq: b.1} and \eqref{eq: b.2}, we begin by showing that the contributions of the circular arc part $C(\xi,r)$ of each Hankel piece $H(\xi)$ vanishes in the limit. Recall Figure \ref{fig:contour}.

\begin{lemma}\label{circlesvanish}
  Fix $x>3$. Assume $-1<\Re z<1$. Then for
  $\xi \in \{1\} \cup \{\rho : |\gamma| < T\},$ we have
  \begin{equation}
    \lim_{r\to0^+} \int_{C(\xi,r)} \widetilde F(s;x) ds  =  0.
  \end{equation}
  Here $C(\xi,r)$ denotes the circular arc of radius $r$ centered at $\xi$ that appears in the Hankel contour $H(\xi).$ See \eqref{eq: H(xi) parts} and Figure \ref{fig:contour}.
\end{lemma}

\begin{proof}
We begin with the case $\xi=1$. Since $\zeta(2s)^w$, $G_{\mathfrak{f}}(s)$, $\Gamma(s)$, and $x^s$ are all
holomorphic near $s=1$, we observe, by the factorization \eqref{eq: zetazdef} of $\zeta(s)^z$ near $s=1$, that
\begin{align}
    \bigl|\widetilde F(s;x)\bigr| &= \bigl|\zeta(s)^z \zeta(2s)^w G_{\mathfrak{f}}(s) \Gamma(s) x^{s}\bigr| \ll_{\mathfrak{f},x} \bigl|s (s-1)^{-z} Z(s;z)\bigr| \ll_{\mathfrak{f},x }|s-1|^{-\Re z} \label{eq: bound Ff near 1}
\end{align}
for $s$ on a sufficiently small circle $C(1,r)$.
Thus, 
\begin{equation}
  \bigl |\int_{C(1,r)} \widetilde F(s;x) ds\bigr |
   \ll 
  r \cdot r^{-\Re z}
   = 
  r^{ 1-\Re z}.
\end{equation}
Since $\Re z<1$, we have $r^{1-\Re z}\to0$ as $r\to0^+$, proving the claim for $\xi=1.$ 

\smallskip
Now let $\rho=\tfrac12+i\gamma$ be a zero with $|\gamma|<T$. By the factorization \eqref{eq: zeta z near rho} of $\zeta(s)^z$ near $s=\rho$, we similarly have
\begin{equation} \label{eq: bound of Ff near rho}
  \bigl|\widetilde F(s;x)\bigr| \ll_{\rho, \mathfrak{f},x} |s-\rho|^{\Re z}
\end{equation}
for $s$ on a sufficiently small circle $C(\rho,r)$. The claim thus follows since $\Re z>-1$.
\end{proof}

We now prove \eqref{eq: b.1} and \eqref{eq: b.2}.
\begin{lemma}
Fix $x>3$ and assume the same notation as above. Assume also that 
\begin{equation}\label{eq:absconv}
-1<\Re z<1,\qquad \Re(w)<1.
\end{equation}
Then \eqref{eq: b.1} and \eqref{eq: b.2} hold.
\end{lemma}

\begin{proof}
Then by Cauchy's theorem, we have
\begin{align}
    &\int_{H(1)}\widetilde F ds + \int_{H(1/2)}\widetilde F ds+ \int_{V_{1/2,1}}\widetilde F ds \\
&=   \lim_{r \to0^+} \Bigl( \int_{I_{1/2,r}^+}\widetilde F ds+\int_{I_{1/2,r}^-}\widetilde F ds\Bigr) + \lim_{r \to0^+} \Bigl( \int_{I_{1,r}^+}\widetilde F ds+\int_{C(1,r)}\widetilde F ds+\int_{I_{1,r}^-}\widetilde F ds\Bigr),
\end{align}
and for every zero $\rho$ with $|\Im\rho|<T$,
\begin{equation}
\int_{H(\rho)}\widetilde F ds+ \int_{V_\rho}\widetilde F ds
=   \lim_{r \to0^+} \Bigl( \int_{I_{\rho,r}^+}\widetilde F ds+\int_{C(\rho,r)}\widetilde F ds+\int_{I_{\rho,r}^-}\widetilde F ds\Bigr),
\end{equation}
where $V_{1/2,1}$ and $V_\rho$ are defined in \eqref{eq: V1/21} and \eqref{eq: Vrho}, respectively. 

First, by Lemma \ref{circlesvanish}, we have  $\int_{C(\xi,r)}\widetilde F ds \to 0$ as $r\to0^+$  for
$\xi\in\{1\}\cup\{\rho: |\gamma|<T\}$.

Moreover, using \eqref{eq: bound Ff near 1} and \eqref{eq: bound of Ff near rho}, we observe that
\begin{equation}\label{eq: tilde F near half}
    |\widetilde F(t \pm i r/2;x)|  \ll  (1-2t)^{-\max(\Re(w),0)}, \qquad t\in(a,\tfrac12),\ 0<r\le r_0,
\end{equation}
\begin{equation}\label{eq: tilde F near rho}
    |\widetilde F(\rho-t \pm i r/2;x)|  \ll_\rho  t^{\min(\Re z,0)}, \qquad t\in(0,\tfrac12-a),\ 0<r\le r_0,
\end{equation}
and for $t\in(\tfrac12,1)$ and $0<r\le r_0$,
\begin{align}\label{eq: tilde F near 1}
|\widetilde F(t \pm i r/2;x)|  &\ll  (1-t)^{-\max(\Re z,0)} (2t-1)^{-\max(\Re(w),0)}.
\end{align}

Now since $-1<\Re(z)<1$ and $\Re(w)<1,$ the dominating functions in \eqref{eq: tilde F near half}, \eqref{eq: tilde F near rho}, and \eqref{eq: tilde F near 1} are integrable on the respective intervals. The claim thus follows by the dominated convergence theorem. This completes the proof. 
\end{proof}

To finish the proof of $(b)$, it thus suffices to compute the jumps $\widetilde F_+-\widetilde F_-$ across the branch cuts. In what follows, we will use the local factorizations \eqref{eq: zetazdef} and \eqref{eq: zeta z near rho} of complex powers of $\zeta(s)$ near $s=1$ and $s=\rho$.

First consider $I_1=(\tfrac12,1)$. On $I_1$, the boundary values of $(t-1)^{-z}$ are
\begin{equation}
(t-1)^{-z}\big|_{\pm}=(1-t)^{-z}e^{\mp i\pi z},
\end{equation}
and therefore writing $\zeta(s)^z= s (s-1)^{-z}Z(s;z),$ we obtain
\begin{equation}
\widetilde F_+(t;x)-\widetilde F_-(t;x)
=
-2i\sin(\pi z)(1-t)^{-z}
tZ(t;z)\zeta(2t)^wG_{\mathfrak{f}}(t)\Gamma(t)x^t.
\end{equation}
It follows that
\begin{align}
\Delta_{I_1}(x)=\frac{\sin(\pi z)}{\pi} \int_{1/2}^{1} (1-t)^{-z}tZ(t;z)\zeta(2t)^wG_{\mathfrak{f}}(t)\Gamma(t)x^t dt.
\end{align}
Using $u=1-t$, this becomes
\begin{equation}
\Delta_{I_1}(x)= \frac{\sin(\pi z)}{\pi}x \int_0^{1/2}e^{-Lu}u^{-z}\mathcal J_1(u;\mathfrak{f}) du,
\end{equation}
where $\mathcal J_1(u;\mathfrak{f})$ is exactly \eqref{J1}. Thus $\Delta_{I_1}(x)=\Delta_1(x)$.

The computations on $I_{1/2}=(a,\tfrac12)$ and $I_\rho=(a,\tfrac12)+i\Im\rho$  with $|\gamma|<T$ are symmetric. On $I_{1/2}=(a,\tfrac12)$, we use
\begin{equation}
\widetilde F_\pm(t;x)
=\Bigl((t-1)^{-z}(2t-1)^{-w}\Bigr)\Big|_\pm
2t^2Z(t;z)Z(2t;w)G_{\mathfrak{f}}(t)\Gamma(t)x^t,
\end{equation}
with
\begin{equation}
(t-1)^{-z}\big|_\pm=(1-t)^{-z}e^{\mp i\pi z},
\qquad
(2t-1)^{-w}\big|_\pm=(1-2t)^{-w}e^{\mp i\pi w}.
\end{equation}

On $I_\rho=(a,\tfrac12)+i\gamma$, write $s=\rho-u$ with $u\in(0,\tfrac12-a),$ so that
\begin{equation}
\begin{aligned}
\widetilde F_\pm(\rho-u;x)
&=(s-\rho)^z\big|_\pm
(\rho-1-u)^{-z}Z_\rho(\rho-u;z)
\zeta(2\rho-2u)^w  \\
&\qquad\times
G_{\mathfrak{f}}(\rho-u)\Gamma(\rho-u)x^{\rho-u},
\end{aligned}
\end{equation}
with
\begin{equation}
(s-\rho)^z\big|_\pm=(-u)^z\big|_\pm=u^z e^{\pm i\pi z}.
\end{equation}

After suitable changes of variables, we recover $\Delta_{I_\xi}(x)=\Delta_\xi(x)$ for all $\xi \in \{1,1/2\} \cup \{\rho: |\gamma|<T\}$. By their definitions in Section \ref{sec: the integrands}, the functions $\mathcal J_1$, $\mathcal J_{1/2}$, and $\mathcal J_\rho$ are clearly holomorphic near $u=0$. This completes the proof of Theorem \ref{thm: exp form Af}. 
\end{proof}

\subsection{Proof of Theorem \ref{thm: watson exp for Delta}}

\begin{proof}
We use Watson's lemma \cite[p. 53-57]{miller2006applied}. Let $v>0$, and suppose
$\mathcal K:(0,v)\to\C$ is absolutely integrable. Suppose also that, for some
$s\in\C$ with $\Re(s)>-1$, we have
\begin{equation}
    \mathcal K(u)=u^s\mathcal J(u)
\end{equation}
near $u=0$, where $\mathcal J$ has $M+1$ continuous derivatives near $0$. Then Watson's lemma gives, as
$L\to\infty$,
\begin{equation}\label{eq: Watson lemma}
    \int_0^v e^{-Lu}\mathcal K(u) du= \sum_{k=0}^{M} \frac{\mathcal J^{(k)}(0)}{k!}  \Gamma(s+k+1)L^{-(s+k+1)} + O_M\bigl(L^{-(\Re(s)+M+2)}\bigr).
\end{equation}

We apply \eqref{eq: Watson lemma} to the Laplace integrals $\Delta_\xi(x)$ given in
Theorem \ref{thm: exp form Af}. The corresponding kernels are absolutely integrable:
\begin{equation}
    u^{-z}\mathcal J_1(u;\mathfrak{f})\in L^1(0,\tfrac12),
    \qquad
    u^{-w}\mathcal J_{1/2}(u;\mathfrak{f})\in L^1(0,\tfrac12-a),
    \qquad
    u^z\mathcal J_\rho(u;\mathfrak{f})\in L^1(0,\tfrac12-a).
\end{equation}
These follow from $\Re z<1$, $\Re(w)<1$, and
$\Re z>-1$, respectively. For completeness, we note that 
$\mathcal J_1$ has a singularity at $u=\tfrac12$, coming from the factor
\begin{equation}
    \zeta(2-2u)^w=(2-2u)(1-2u)^{-w}Z(2-2u;w).
\end{equation}
This singularity is integrable because $\Re(w)<1$.

Expanding each $\mathcal J_\xi(u;\mathfrak{f})$ in a power series near $u=0,$ the claim follows.
\end{proof}

\subsection{Proof of Theorem \ref{thm: bias theorem}}

\subsubsection{The constant $c_{1/2}(\mathfrak{f};z,w)$} \label{sec: cst c12}
Recall that, in \eqref{eq: c12 def}, we defined
\begin{equation}
    c_{1/2}(\mathfrak{f};z,w)= \frac{\sin\bigl(\pi(z+w)\bigr)}{\pi} 2^{-w+1}  \Gamma(1-w) \lambda_{1/2,0}(\mathfrak{f};z,w).
\end{equation}
Using the definitions $\lambda_{1/2,0}=\mathcal J_{1/2}(0;\mathfrak{f})$ and \eqref{J12}, we obtain for $z, w \not \in \Z$ with $-1<\Re(z)<1$ and $\Re(w)<1,$
\begin{equation}\label{eq: expl c12}
    c_{1/2}(\mathfrak{f};z,w)= \frac{2^{-w}}{\sqrt{\pi}} \sin\bigl(\pi(z+w)\bigr) \Gamma(1-w) \bigl(-\zeta(1/2) \bigr)^z G_\mathfrak{f}(1/2),
\end{equation}
where $G_\mathfrak{f}$ is defined in \eqref{eq:Gf def prod}. From \eqref{eq: expl c12}, it follows that 
\begin{equation}
    c_{1/2}(\mathfrak{f};z,w)=0 \qquad \iff \qquad z+w \in \Z, \quad \text{or} \quad G_\mathfrak{f}(1/2)=0.
\end{equation}
Now from \eqref{eq: Gp def}, \eqref{eq:Gf def prod}, and absolute convergence of $G_\mathfrak{f}(1/2)$, we see that $G_\mathfrak{f}(1/2)=0$ iff $g_\mathfrak{f}(p^{-1/2})=0$ for some prime $p,$ where $g_{\mathfrak{f}}(u)= \sum_{j=0}^\infty \varepsilon_j u^j$. However, since $|\varepsilon_j|\le1,$ it follows that 
\begin{equation}
    \Bigl| \sum_{j=1}^\infty \varepsilon_j p^{-j/2} \Bigr| \le \sum_{j=1}^\infty  p^{-j/2} =\frac{p^{-1/2}}{1- p^{-1/2}}.
\end{equation}
Therefore, $g_\mathfrak{f}(p^{-1/2})=0$ only if $\frac{p^{-1/2}}{1- p^{-1/2}} \ge 1,$ so that $p=2,3.$ Thus, we have the following criterion 
\begin{equation}
    c_{1/2}(\mathfrak{f};z,w)=0 \qquad \iff \qquad z+w \in \Z, \quad \text{or} \quad g_\mathfrak{f}(2^{-1/2}) g_\mathfrak{f}(3^{-1/2})=0  
\end{equation}

\begin{ex}
    Let $\varepsilon_k= e^{\frac{k\pi i}{3}}$ for $k\ge 1.$ Then 
    \begin{equation}
        g_\mathfrak{f}(p^{-1/2})=\frac{1}{1-e^{\frac{\pi i}{3}} p^{-1/2}} \neq 0, 
    \end{equation}
    for all primes $p$. Moreover, $z+w = i \sqrt{3}/2 \not \in \Z.$ Thus, $c_{1/2}(\mathfrak{f};z,w) \neq0.$ In this example, $\Re(z)=1/2,$ $\Re(w)=-1/2,$ and $\Re(z+w)=0,$ so our bias theorem applies and classifies this example as an apparent bias.
\end{ex}

\subsubsection{The WMH assumption} \label{sec: WMH}
In Theorem \ref{thm: bias theorem}, we assumed the weak Mertens hypothesis (WMH):
\begin{equation}\label{eq:WMH}
    \int_1^X\bigl(\frac{M(x)}{x}\bigr)^2 dx \ll \Log X.
\end{equation}

A useful consequence of \eqref{eq:WMH} is a non-trivial lower bound for gaps between consecutive ordinates of zeta zeros: under WMH, there exists a constant $A_1>0$ such that, for all consecutive ordinates
$e^e<\gamma<\gamma'$ of zeros $\rho=\tfrac12+i\gamma$ and $\rho'=\tfrac12+i\gamma'$, we have
\begin{equation}\label{eq: consecutive zero spacing}
    \gamma'-\gamma >\  \frac{A_1}{\gamma}\exp \Bigl(-A_1 \frac{\Log \gamma}{\Log \Log \gamma}\Bigr)\ge \frac{A_1}{\gamma^{A_1+1}}  .
\end{equation}
See, for example, Titchmarsh \cite[14.29.1, 14.31]{titchmarsh1986theory}.

\begin{remark}\label{rem:chavez}
To prove our results, we do not need the full strength of this assumption. It suffices to assume \eqref{eq: consecutive zero spacing} or any lower bound of the form 
\begin{equation}
    \gamma'-\gamma \gg  \exp \Bigl({- o\Bigl( \frac{\gamma}{\Log \gamma} \Bigr)} \Bigr).
\end{equation}
In fact, it is sufficient to assume an appropriate effective lower bound on $|{\zeta'(\rho)}|$ to obtain the required lower bound on $\gamma'-\gamma$ (see, e.g., the argument of Titchmarsh \cite[14.31]{titchmarsh1986theory}).
\end{remark}

\subsubsection{The proof}
We first present an elementary fact that we shall use throughout the proof. 
\begin{lemma} \label{lem imp fact}
    Fix $\alpha>0.$ Let $g$ and $h$ be continuous functions on $[\alpha, \infty)$. Assume $h$ is bounded on $[\alpha, \infty)$ and $g(x)=o(1)$ as $x \to \infty$. Then 
    \begin{equation}
        \frac{1}{\beta}\int_\alpha^\beta h(x) g(x) dx=o(1), \qquad \beta \to \infty.
    \end{equation}
\end{lemma}
\begin{proof}[proof of Lemma \ref{lem imp fact}]
    Assume $h$ is bounded by $C$ on $[\alpha, \infty)$. Let $\epsilon>0$ be given. Choose $\beta_\epsilon>\alpha$ large enough so that $|g(x)| \le \epsilon$ for all $x\ge \beta_\epsilon.$  Then for all $\beta \ge \beta_\epsilon,$ we have
    \begin{align}
        \Bigl | \frac{1}{\beta}\int_\alpha^\beta h(x) g(x) dx \Bigr | &= \Bigl | \frac{1}{\beta}\int_\alpha^{\beta_\epsilon} h(x) g(x) dx  + \frac{1}{\beta}\int_{\beta_\epsilon}^\beta h(x) g(x) dx \Bigr |\\
        &\le \frac{ C_\epsilon (\beta_\epsilon-\alpha)}{\beta} + C \epsilon,
    \end{align}
    for some $C_\epsilon \in \R.$ Therefore, 
    \begin{equation}
        \limsup_{\beta \to \infty} \Bigl | \frac{1}{\beta}\int_\alpha^\beta h(x) g(x) dx \Bigr |\le  C \epsilon.
    \end{equation}
\end{proof}

\begin{proof}[proof of Theorem \ref{thm: bias theorem}]
Throughout the proof, assume the notations of Theorem \ref{thm: bias theorem}. To prove the theorem, it will be convenient to first establish the following uniform boundedness

\begin{equation}\label{eq: bound on sum rho}
    \sup_{x\ge 3}\ \bigl | \sum_{\rho}\frac{\Delta_\rho(x)}{x^{1/2} (\Log x)^{-(z+1)}}\bigr |<\infty,
\end{equation}
where $\Delta_{\rho}(x)$ is defined in Theorem \ref{thm: exp form Af}:

\begin{equation}
\frac{\Delta_{\rho}(x)}{x^{1/2}} = - \frac{\sin(\pi z)}{\pi}  x^{i\gamma} \int_{0}^{\beta} e^{-Lu} u^{z}  \mathcal{J}_{\rho}(u;z,w,\mathfrak{f}) du.
\end{equation}
Here $L:=\Log x,  \beta:=1/2-a,$ and $\alpha:=\Re z.$ Hence for every $x\ge 3$, since $e^{-Lu} \le 1$ for $u\ge 0,$ we have the uniform bound
\begin{align}\label{eq:Delta rho uniform bound}
\bigl |\frac{\Delta_{\rho}(x)}{x^{1/2}}\bigr |
&\le \frac{|\sin(\pi z)|}{\pi} \int_{0}^{\beta} e^{-Lu}   u^{\alpha}  \bigl |\mathcal{J}_{\rho}(u;z,w,\mathfrak{f})\bigr | du \\
&\le \frac{|\sin(\pi z)|}{\pi} \Big(\sup_{0\le u\le\beta}|\mathcal{J}_{\rho}(u;z,w,\mathfrak{f})|\Big)\int_0^\beta e^{-Lu}u^{\alpha} du \\
&\le \frac{|\sin(\pi z)|}{\pi} \Big(\sup_{0\le u\le\beta}|\mathcal{J}_{\rho}(u;z,w,\mathfrak{f})|\Big)\Gamma(\alpha+1) L^{-(\alpha+1)}, \label{eq: bound Delta rho .}
\end{align}
where the last inequality follows from the following change of variables $v=Lu$:
\begin{equation}
\int_0^\beta e^{-Lu}u^{\alpha} du
= L^{-(\alpha+1)}\int_0^{\beta L} e^{-v}v^{\alpha} dv
\le L^{-(\alpha+1)}\int_0^\infty e^{-v}v^{\alpha} dv
= \Gamma(\alpha+1)L^{-(\alpha+1)},
\end{equation}
which is valid because $\alpha>-1$.

Let us define the dominating function of $\rho$
\begin{equation} \label{eq: Crho def}
    C_{\rho}(z,w,\mathfrak{f}):= \frac{|\sin(\pi z)|}{\pi} \Big(\sup_{0\le u\le\beta}|\mathcal{J}_{\rho}(u;z,w,\mathfrak{f})|\Big)\Gamma(\Re z+1).
\end{equation}

In particular, a sufficient condition for \eqref{eq: bound on sum rho} is
\begin{equation}\label{eq: sum Crho}
\sum_{\rho} C_{\rho}(z,w,\mathfrak{f}) <\infty,
\end{equation}
since \eqref{eq: bound Delta rho .} and \eqref{eq: sum Crho} implies \eqref{eq: bound on sum rho}.

We will first establish the sufficient condition \eqref{eq: sum Crho} for the uniform boundedness condition \eqref{eq: bound on sum rho}. Throughout this section, write
\begin{equation}
    \alpha:=\Re(z)\in(-1,1), \qquad \beta:=\tfrac12-a\in(0,\tfrac16],
\end{equation}
and for a fixed zero $\rho_0= 1/2 + i \gamma_0$, define
\begin{equation}\label{eq: S rho0 def}
S_{\rho_0}:=\{\gamma\neq\gamma_0: \zeta(1/2 + i \gamma)=0,  |\gamma-\gamma_0|\le 1\},
\end{equation}
and
\begin{equation} \label{eq: Mrho0}
    M_{\rho_0}:=|S_{\rho_0}|\ll \Log |\gamma_0|,
\end{equation}
where the last estimate follows from the Riemann-von Mangoldt formula, and the implied constant is independent of $\gamma_0$.

\begin{prop}\label{lem: spacing implies sum Crho}
The lower bound on consecutive zeros-spacing \eqref{eq: consecutive zero spacing} implies \eqref{eq: sum Crho}, and hence also the uniform boundedness condition \eqref{eq: bound on sum rho}. 
\end{prop}

To prove Proposition \ref{lem: spacing implies sum Crho}, we first need a bound on $C_\rho$ that is defined in \eqref{eq:Delta rho uniform bound}.

\begin{lemma} \label{lem: Crho0 bdd}
There exists $T_0>0$ such that for all $\gamma$ with $|\gamma| \ge T_0$, we have
\begin{equation}
C_{\rho}(z,w,\mathfrak{f})\ \ll_{a,\mathfrak{f}} \exp \Bigl(-\frac{\pi}{4}|\gamma|\Bigr).
\end{equation}
\end{lemma}
\begin{proof}[Proof of Lemma \ref{lem: Crho0 bdd}] Let $\rho_0=\tfrac12+i\gamma_0$ be a nontrivial zero of $\zeta(s)$. By symmetry, we may assume, without loss of generality, that $\gamma_0>0$ and is large.
We first recall that
\begin{equation} 
    C_{\rho_0}(z,w,\mathfrak{f})= \frac{|\sin(\pi z)|}{\pi} \Big(\sup_{0\le u\le\beta}|\mathcal{J}_{\rho_0}(u;z,w,\mathfrak{f})|\Big)\Gamma(\alpha+1) \ll_\mathfrak{f} \sup_{0\le u\le\beta}|\mathcal{J}_{\rho_0}(u;z,w,\mathfrak{f})|,
\end{equation}
where $\mathcal{J}_{\rho_0}$ is defined in \eqref{Jrho}. We bound each factor of $\mathcal{J}_{\rho_0}$. 

To begin with, we derive a bound on $Z_{\rho_0}(\rho_0-u;z).$ Indeed, using the definition of $L_{\rho_0}$ in \eqref{eq: Lrho}, we know that
\begin{equation} 
L_{\rho_0}(s)=\Log(s-1)+\log\zeta(s)-\Log(s-\rho_0).
\end{equation}
Moreover, using \cite[Thm. 9.6(B)]{titchmarsh1986theory} and \eqref{eq: S rho0 def}, we have
\begin{equation}
    \log\zeta(s)-\Log(s-\rho_0)= \sum_{\gamma\in S_{\rho_0}}\Log \bigl(s-\rho\bigr)
+O(\Log(|\gamma_0|)),
\end{equation}
where the RHS is well-defined at $s= \rho_0 -u$ for $u\in(0,\beta]$. Thus, we obtain uniformly for $u\in(0,\beta]$ that
\begin{equation}\label{eq: Lrho sum}
L_{\rho_0}(\rho_0-u) = \Log(\rho_0-u-1) +\sum_{\gamma\in S_{\rho_0}}\Log \bigl(-u+i(\gamma_0-\gamma)\bigr) +O(\Log(|\gamma_0|)).
\end{equation}
Exponentiating \eqref{eq: Lrho sum}, and using $|z|\le 1$, we deduce
\begin{equation}\label{eq: Zrho bdd}
|Z_{\rho_0}(\rho_0-u;z)| =|e^{zL_{\rho_0}(\rho_0-u)}| \ll (|\gamma_0|+3)^{C} e^{C M_{\rho_0}}  |\rho_0-u-1|^{\alpha} \prod_{\gamma\in S_{\rho_0}}\bigl|u+i(\gamma_0-\gamma)\bigr|^{ \alpha},
\end{equation}
for some absolute constant $C>0$. The number $C>0$ may change from line to line if needed. 

We thus need a bound on the product in \eqref{eq: Zrho bdd}. Let us define
\begin{equation}
\eta_{\rho_0}:=
\begin{cases}
\min_{\gamma\in S_{\rho_0}}|\gamma_0-\gamma|, & \text{if }M_{\rho_0}>0,\\
1, & \text{if }M_{\rho_0}=0.
\end{cases}
\end{equation}

\noindent Now for $u\in[0,\beta]$ and $\gamma\in S_{\rho_0}$, we have $|\gamma-\gamma_0|\le 1$, and the triangle inequality implies
\begin{equation}
  \eta_{\rho_0} \le  |\gamma_0-\gamma| \le  |u+i(\gamma_0-\gamma)|\le 2.
\end{equation}
Thus, if $\alpha\ge 0$, using \eqref{eq: Mrho0}, we have for $u\in[0,\beta]$,
\begin{equation}\label{eq: alpha prod}
\prod_{\gamma \in S_{\rho_0}}|u+i(\gamma_0-\gamma)|^{ \alpha}\le 2^{\alpha M_{\rho_0}} \ll e^{C' \cdot \Log |\gamma_0|},
\end{equation}
for some absolute constant $C'>0$.

On the other hand, if $-1<\alpha<0$, using \eqref{eq: Mrho0}, we have for $u\in[0,\beta]$ 
\begin{equation} \label{eq: alpha prod2}
\prod_{\gamma \in S_{\rho_0}}|u+i(\gamma_0-\gamma)|^{ \alpha}\le  \eta_{\rho_0}^{-|\alpha|M_{\rho_0}}.
\end{equation}

However, suppose $\gamma_1 < \gamma_0 < \gamma_2$ are consecutive zeros' ordinates, then using the consecutive spacing bound \eqref{eq: consecutive zero spacing}, we know that 
\begin{equation}\label{eq: eta rho0 lower bound}
\eta_{\rho_0}\ >\  \min \Bigl( \frac{A_1}{|\gamma_0|^{A_1+1}},\frac{A_1}{|\gamma_1|^{A_1+1}} \Bigl)= \frac{A_1}{|\gamma_0|^{A_1+1}},
\end{equation}
for some constant $A_1$ independent of $\gamma_0.$

Therefore, \eqref{eq: Mrho0} and \eqref{eq: eta rho0 lower bound} give
\begin{equation} \label{eq: etarhorM}
\eta_{\rho_0}^{-|\alpha|M_{\rho_0}} =\exp \Bigl(|\alpha|M_{\rho_0}\Log\frac{1}{\eta_{\rho_0}}\Bigr) \le \exp \bigl(C'(\Log(|\gamma_0|))^2\bigr),
\end{equation}
for some possibly larger absolute constant $C'>0$. 

Therefore, using \eqref{eq: Zrho bdd}, \eqref{eq: alpha prod}, \eqref{eq: alpha prod2}, \eqref{eq: etarhorM} and the estimate $|\rho_0-u-1|^{\alpha} \le (|\gamma_0|+3)$, it follows that 

\begin{equation}\label{eq: Zrho bound}
|Z_{\rho_0}(\rho_0-u;z)| \le (|\gamma_0|+3)^{C} e^{C M_{\rho_0}} 
\exp \bigl(C'(\Log(|\gamma_0|))^2\bigr).
\end{equation}

On the other hand, since $\Re(\rho_0 - u)>1/3,$ we have the following uniform bounds on $u\in[0,\beta]$ for the remaining factors in $\mathcal J_{\rho_0}(u;z,w,\mathfrak{f})$:

\begin{equation}
|\zeta(2\rho_0-2u)^{w}|\ll (|\gamma_0|+3)^{\varepsilon} \qquad \text{ for every fixed } \varepsilon>0,
\end{equation}
\begin{equation}
|(\rho_0-1-u)^{-z}|\ll (|\gamma_0|+3)^{C},
\end{equation}
\begin{equation}
|G_{\mathfrak{f}}(\rho_0-u)|\ll_{a,\mathfrak{f}}1,
\end{equation}
and by Stirling's formula,
\begin{equation}\label{eq: Gamma decay}
|\Gamma(\rho_0-u)| \ll |\gamma_0|^{-u} e^{-\pi|\gamma_0|/2} \ll e^{-\pi|\gamma_0|/2}.
\end{equation}
Combining \eqref{eq: Zrho bound}-\eqref{eq: Gamma decay} and using \eqref{eq: Mrho0}, we obtain the following:
\begin{equation}\label{eq: Jrho bound.}
\sup_{0\le u\le\beta}|\mathcal{J}_{\rho_0}(u;z,w,\mathfrak{f})| \ll_{a,\mathfrak{f}} e^{-\pi|\gamma_0|/2}  (|\gamma_0|+3)^{C} \exp \bigl(C'(\Log(|\gamma_0|))^2\bigr).
\end{equation}
Lastly, \eqref{eq: Jrho bound.} is true for any zero $\rho_0$ of $\zeta(s)$, so assuming the ordinate $|\gamma_0|$ is large enough, the sub-exponential term $(|\gamma_0|+3)^{C} \exp \bigl(C'(\Log(|\gamma_0|))^2\bigr)$ is dominated by $e^{\pi|\gamma_0|/4}$. This completes the proof of Lemma \ref{lem: Crho0 bdd}.

\end{proof}

We are now ready to prove Proposition \ref{lem: spacing implies sum Crho}.
\begin{proof}[Proof of Proposition \ref{lem: spacing implies sum Crho}]
    By Lemma \ref{lem: Crho0 bdd}, there exists $T_0$ such that for all $\gamma$ with $|\gamma| \ge T_0$, we have
\begin{equation}
C_{\rho}(z,w,\mathfrak{f})\ \ll\ \exp \Bigl(-\frac{\pi}{4}|\gamma|\Bigr).
\end{equation}

On the other hand, we may sum $C_{\rho}(z,w,\mathfrak{f})$ dyadically:
\begin{equation}
    \sum_\rho C_{\rho}(z,w,\mathfrak{f}) = \sum_{k \ge0} \sum_{\rho\in\mathcal Z_k} C_{\rho}(z,w,\mathfrak{f}),
\end{equation}
where for $k\ge 0$, we define
\begin{equation}
\mathcal Z_k:=\{\rho:\ 2^k\le |\gamma|<2^{k+1}\}.
\end{equation}

By the Riemann-von Mangoldt formula \cite[Ch. 9]{titchmarsh1986theory}, we know $|\mathcal Z_k|\ll 2^k (k+1)$. Thus, 
\begin{equation}
\sum_{\rho\in\mathcal Z_k} C_\rho
\ll |\mathcal Z_k| \exp \Bigl(-\frac{\pi}{4}2^k\Bigr)
\ll 2^k (k+1) \exp \Bigl(-\frac{\pi}{4}2^k\Bigr),
\end{equation}
for all $k\ge k_0$ for some large $k_0$ with $2^{k_0}\ge T_0.$ Lastly, since the series $\sum_k 2^k (k+1)\exp(-(\pi/4)2^k)$ is convergent, and the finitely many zeros with
$|\gamma|\le 2^{k_0}$ contribute a finite amount, it follows that
\begin{equation}
\sum_\rho C_\rho(z,w,\mathfrak{f})<\infty.
\end{equation}
This completes the proof of Proposition \ref{lem: spacing implies sum Crho}.
\end{proof}

We now derive the expansion \eqref{eq: Bf expan} of $B_{\mathfrak{f}}^{\exp}(x;z,w)$. By Theorem \ref{thm: exp form Af}, there exists $T_0>0$ such that for every fixed $x\ge 3$ and every $T\ge T_0$ satisfying \eqref{eq: height T lower bd}, we have
\begin{equation}\label{eq: Af exp exp}
    A_{\mathfrak{f}}^{\exp}(x) - \Delta_1(x)= \Delta_{1/2}(x) + S_T(x) + O \bigl(x^c e^{-T} + x^a\bigr).
\end{equation}

Now letting $T\to\infty$, with the property \eqref{eq: height T lower bd}, in \eqref{eq: Af exp exp} with $x$ fixed, and using Proposition \ref{lem: spacing implies sum Crho}, we obtain 
\begin{equation}\label{eq: Af exp sum}
    A_{\mathfrak{f}}^{\exp}(x) - \Delta_1(x) =  \Delta_{1/2}(x) + \sum_{\rho}\Delta_\rho(x) + O \bigl(x^a\bigr), \qquad x\ge 3.
\end{equation}
The series $\sum_{\rho}\Delta_\rho(x)$ converges absolutely by Proposition \ref{lem: spacing implies sum Crho}.

Dividing \eqref{eq: Af exp sum} by $x^{1/2}L^{w-1}$ and using the definition of $B_{\mathfrak{f}}^{\exp}$ in \eqref{eq: Bf def}, we obtain
\begin{equation}\label{eq: Bf decomp}
    B_{\mathfrak{f}}^{\exp}(x;z,w) = \frac{\Delta_{1/2}(x)}{x^{1/2}L^{w-1}} + L^{-(z+w)}\sum_{\rho} \frac{\Delta_\rho(x)}{x^{1/2}L^{-(z+1)}} + o(1),
\end{equation}
since $a<\tfrac12$. On the other hand, by Theorem \ref{thm: watson exp for Delta}, we have 
\begin{equation}\label{eq: Watson delta half}
  \frac{\Delta_{1/2}(x)}{x^{1/2}(\Log x)^{w-1}} = c_{1/2}(\mathfrak{f};z,w) + o(1), \qquad x\to\infty.
\end{equation}
Combining \eqref{eq: Bf decomp} and \eqref{eq: Watson delta half}, we get the expansion \eqref{eq: Bf expan} of $B_{\mathfrak{f}}^{\exp}$:
\begin{equation} \label{eq: biasexp2}
  B_{\mathfrak{f}}^{\exp}(x;z,w)= c_{1/2}(\mathfrak{f};z,w)+ L^{-(z+w)}\sum_{\rho} \frac{\Delta_\rho(x)}{x^{1/2}L^{-(z+1)}}+ o(1), \qquad x\to\infty.
\end{equation}

We now prove the assertions $(1),  (2),$ and $(3)$ of Theorem \ref{thm: bias theorem}. Throughout the rest of the proof, set 
\begin{equation}\label{eq: S(x) def}
  S(x):= \sum_{\rho} \frac{\Delta_\rho(x)}{x^{1/2}(\Log x)^{-(z+1)}}.
\end{equation}
In the proof of $(2),$ we shall also show that $S(x)$ is not identically zero. This will, in turn, prove the claim that the sum $\sum_{\rho} \Delta_\rho(x)$ is not identically zero.

We begin with the proof of $(1)$.

\begin{proof}[Proof of Theorem \ref{thm: bias theorem}\textup{(1)}]
Assume $\Re(z+w)>0$ and $c_{1/2}(\mathfrak{f};z,w)\neq 0$. By the uniform boundedness condition \eqref{eq: bound on sum rho}, there exists a constant
$C> 0$ such that
\begin{equation}\label{eq: S(x) bdd}
  |S(x)|\le C, \qquad x\ge 3.
\end{equation}

Hence, by the bias expansion \eqref{eq: biasexp2}, we have
\begin{equation}
  \bigl|B_{\mathfrak{f}}^{\exp}(x;z,w)-c_{1/2}(\mathfrak{f};z,w)\bigr| = O\bigl((\Log x)^{-\Re(z+w)}\bigr) + o(1) \xrightarrow[x\to\infty]{} 0,
\end{equation}
since $\Re(z+w)>0$.

By Definition \ref{def: bias }, this means that $A_{\mathfrak{f}}^{\exp}$ has persistent
bias $c_{1/2}(\mathfrak{f};z,w)$ at the scale $x^{1/2}(\Log x)^{w-1}$. This proves $(1)$. 
\end{proof}

We now prove $(2)$ of Theorem \ref{thm: bias theorem}.

\begin{proof}[Proof of Theorem \ref{thm: bias theorem}\textup{(2)}]
Assume that $\Re(z+w)=0$ and $c_{1/2}(\mathfrak{f};z,w)\neq 0$. Throughout this proof, write 
\begin{equation}
    z+w=i\tau,\qquad \tau\in\mathbb R.
\end{equation}

Using \eqref{eq: biasexp2}, \eqref{eq: S(x) bdd}, and $\Re(z+w)=0$, it follows immediately that $B_{\mathfrak{f}}^{\exp}(x;z,w)$ is bounded for all sufficiently large $x$:
\begin{equation}
\bigl|B_{\mathfrak{f}}^{\exp}(x;z,w)\bigr|
\le |c_{1/2}(\mathfrak{f};z,w)| + C + 1 <\infty, \qquad x\to \infty. 
\end{equation}
\vspace{2mm}

We next prove the logarithmic convergence of $B_{\mathfrak{f}}^{\exp}(x;z,w)$ to $c_{1/2}(\mathfrak{f};z,w)$:
\begin{equation}\label{eq: goal log avr}
\lim_{X\to\infty}\frac{1}{\Log X}\int_{e}^{X} \bigl(B_{\mathfrak{f}}^{\exp}(u;z,w)-c_{1/2}(\mathfrak{f};z,w)\bigr) \frac{du}{u}=0,
\end{equation}
where we note that the constant lower limit of integration does not affect the limit value. 

Putting $u=e^t$ and $Y= \Log X$, we see that \eqref{eq: goal log avr} is equivalent to
\begin{equation}
\lim_{Y\to\infty}\frac{1}{Y}\int_{1}^{Y} \bigl(B_{\mathfrak{f}}^{\exp}(e^t;z,w)-c_{1/2}(\mathfrak{f};z,w)\bigr) dt=0.
\end{equation}
Using the expansion \eqref{eq: biasexp2} of $B_\mathfrak{f}$, it is enough to prove that
\begin{equation}\label{eq: goal log S}
\lim_{Y\to\infty}\frac{1}{Y}\int_{1}^{Y} t^{-(z+w)} S(e^t) dt=0,
\end{equation}
where $S(x)$ is given in \eqref{eq: S(x) def}.

\smallskip
For a zero $\rho=\frac{1}{2}+i\gamma$, the Watson expansion \eqref{eq: Delta xi} for $\Delta_{\rho},$ with $M=0,$ gives
\begin{equation}\label{eq:WatsonM0}
\Delta_{\rho}(x)= - \frac{\sin(\pi z)}{\pi} x^{1/2+i\gamma}
\Bigl(\lambda_{\rho,0} \Gamma(1+z) L^{-(1+z)} +O_{\rho}\bigl(L^{-(2+\Re z)}\bigr) \Bigr), \qquad x\to\infty.
\end{equation}
Dividing by $x^{1/2}L^{-(z+1)}$, we get
\begin{equation}\label{eq:normWatson}
N_\rho(x):= \frac{\Delta_\rho(x)}{x^{1/2}L^{-(z+1)}} = a_\rho x^{i\gamma} + o_\rho(1), \qquad x \to \infty,
\end{equation}
where
\begin{equation}\label{eq: arho def}
a_\rho:=-\frac{\sin(\pi z)}{\pi} \Gamma(1+z) \lambda_{\rho,0}.
\end{equation}

By the definition of $\lambda_{\rho,0},$ definition \eqref{eq: Crho def} and $z \not \in \Z$, we have $\lambda_{\rho,0}= \mathcal{J}_\rho(0) \ll_z C_\rho$ where $\mathcal J_\rho$ is given in \eqref{Jrho}. Hence, by the definition of $a_\rho$ \eqref{eq: arho def}, it follows that 
\begin{equation}\label{eq: arho bdd}
    |a_\rho|\ll_z C_\rho.
\end{equation}
Therefore, using Proposition \ref{lem: spacing implies sum Crho}, we know that the series
\begin{equation}\label{eq: f def}
    f(t):=\sum_\rho a_\rho e^{i\gamma t}
\end{equation}
converges absolutely and uniformly in $t\in\R$.

However, using \eqref{eq: bound Delta rho .}, \eqref{eq: arho bdd}, and \eqref{eq: Crho def}, we have
\begin{align}
   |N_\rho(e^t)-a_\rho e^{i\gamma t}| \le |N_\rho(e^t)|+|a_\rho| \ll_z C_\rho,
\end{align}
where the latter is summable by Proposition \ref{lem: spacing implies sum Crho}, so it follows by the dominated convergence theorem and \eqref{eq:normWatson} that 
\begin{equation}\label{eq: S decomp}
S(e^t)=f(t)+o(1), \qquad t\to\infty.
\end{equation}
where $S(e^t),$ $f(t)$ are given in \eqref{eq: S(x) def} and \eqref{eq: f def}.

Substituting \eqref{eq: S decomp} in \eqref{eq: goal log S}, we see that it remains to show 
\begin{equation}\label{eq: goal log f}
\frac{1}{Y}\int_1^Y t^{-i\tau}f(t) dt\to0, \qquad Y \to \infty.
\end{equation}

Recall $z+w=i \tau.$ Expanding \eqref{eq: goal log f} and using absolute convergence of $f(t)$, we have 
\begin{equation}
\frac{1}{Y}\int_1^Y t^{-i\tau}f(t) dt= \sum_\rho a_\rho  \frac{1}{Y} \int_{1}^{Y} t^{-i\tau}e^{i\gamma t} dt.
\end{equation}
Hence it suffices to prove that
\begin{equation}
\frac{1}{Y}\sum_\rho a_\rho \int_{1}^{Y} t^{-i\tau}e^{i\gamma t} dt \longrightarrow 0, \qquad Y\to\infty.
\end{equation}

Indeed, using $|\tau| \le |z| + |w| \le 3$ and integrating by parts give
\begin{align}
\Bigl| \int_{1}^{Y} t^{-i\tau}e^{i\gamma t} dt \Bigr|
&=\Bigl|\left[\frac{t^{-i\tau}e^{i\gamma t}}{i\gamma}\right]_{1}^{Y} +\frac{i \tau}{i\gamma}\int_{1}^{Y} t^{-i\tau-1}e^{i\gamma t} dt \Bigr| \\
&\le \frac{2}{|\gamma|} + \frac{3}{|\gamma|} \int_{1}^{Y} t^{-1} dt  \\
&\ll \frac{\Log Y}{|\gamma|}. \label{eq: onegammabound}
\end{align}
Therefore, from \eqref{eq: onegammabound} and \eqref{eq: arho bdd}, we have
\begin{equation}
\bigl |\frac{1}{Y}\sum_\rho a_\rho \int_{1}^{Y} t^{-i\tau}e^{i\gamma t} dt\bigr | \ll \frac{\Log Y}{Y}\sum_\rho \frac{|a_\rho|}{|\gamma|} \le \frac{\Log Y}{Y}\sum_\rho {|a_\rho|} \xrightarrow[Y\to\infty]{}0 .
\end{equation}
This completes the proof of \eqref{eq: goal log S} and hence \eqref{eq: goal log avr}. It remains to show that the pointwise limit does not exist. Before proving this, we show that there exists a nontrivial zero $\rho_0$ of $\zeta(s)$ such that $a_{\rho_0} \neq 0.$ This will, in turn, imply that $f$ is not identically zero and hence $S$ is not identically zero, as desired.

\begin{lemma} \label{lem: arho0 not zero}
    Assume RH, SZC, and $z \not \in \Z.$ Let $a_\rho$, $f$, and $S$ be as given in \eqref{eq: arho def}, \eqref{eq: f def}, and \eqref{eq: S(x) def}, respectively. Then there exists a nontrivial zero $\rho_0$ of $\zeta(s)$ such that $a_{\rho_0} \neq 0.$ Consequently, $f$, and hence $S,$ is not identically zero.
\end{lemma}

\begin{proof}[proof of Lemma \ref{lem: arho0 not zero}]
    First recall the definitions \eqref{eq: arho def}, $\lambda_{\rho,0}= \mathcal{J}_\rho(0),$ and \eqref{Jrho} and note that for a zero $\rho,$ 
    \begin{equation}\label{eq: arho ext}
        a_\rho=-\frac{\sin(\pi z)}{\pi} \Gamma(1+z) (\rho-1)^{-z} G_{\mathfrak{f}}(\rho)  Z_\rho(\rho;z)  \zeta(2\rho)^{w}  \Gamma(\rho).
    \end{equation}
    Under RH, SZC, and $z \not \in \Z,$ it is clear that all of the factors in \eqref{eq: arho ext} are nonzero except possibly for $G_{\mathfrak{f}}(\rho),$ defined by \eqref{eq:Gf def prod}. We show that there exists a zero $\rho_0$ such that $G_{\mathfrak{f}}(\rho_0) \neq 0.$ 
    
    Indeed, by \eqref{eq:Gf def prod}, $G_{\mathfrak{f}}(s)= \prod_p G_p(s)$, with $G_p$ defined in \eqref{eq: Gp def}, and by the proof of Proposition \ref{prop: properties of Gf} $(b)$, it is clear that $G_{\mathfrak{f}}(\rho)$ converges to a nonzero value iff none of the factors $G_p(\rho)$ is zero.

    Now by \eqref{eq: Gp def}, we observe that if $G_p(s)=0$ for some $s=1/2 +i t,$ then $g_\mathfrak{f}(p^{-s})=0,$ i.e.
    \begin{equation}\label{eq: gf p}
        g_\mathfrak{f}(p^{-1/2} e^{- i t \Log p})=0,
    \end{equation}
    where recall $g_\mathfrak{f}(u)= \sum_{j=0}^\infty \varepsilon_j u^j.$ 
    
    However, $g_\mathfrak{f}(0)=1 \neq 0,$ so by continuity of $g_\mathfrak{f},$ there exists $r>0$ such that $g_\mathfrak{f}(u) \neq0$ for $|u| < r$. Therefore, if \eqref{eq: gf p} is true for some prime $p$ and $t \in \R,$ then necessarily $p \le  r^{-2}.$ In other words, there are finitely many primes $p$ for which \eqref{eq: gf p} can hold. The zeros in \eqref{eq: gf p}, if any, all fall on the circles $|u|=p^{-1/2}$ with $p$ prime, which are subsets of the unit disk.

    However, $g_\mathfrak{f}$ is holomorphic and not identically zero on the unit disk. Thus, for every prime $p\le r^{-2},$ the zeros on the compact circle $|u|=p^{-1/2}$, if any, are finitely many. It remains to account for the fact that a zero on the circle $|u|=p^{-1/2}$ can be hit by infinitely many $t$'s. 
    
    Indeed, for each fixed prime $p\le r^{-2}$ and each fixed zero $u_p$ on $|u|=p^{-1/2}$, the solutions to $p^{-1/2} e^{- i t \Log p}=u_p$ are spaced by $\frac{2 \pi}{\Log p},$ so for every $T>15,$ there are $O_p(T)$ such solutions with $|t|\le T$. Therefore, for every $T>15,$ 
    \begin{equation}\label{eq: Gf OT}
        \Bigl|\{t: |t|\le T , \quad G_{\mathfrak{f}}(1/2 +it)=0\} \Bigr| = O_{\mathfrak{f}}(T) .
    \end{equation} 
    Nevertheless, by the Riemann-von Mangoldt formula, $|\{\rho : \zeta(\rho)=0 , |\Im \rho|\le T\}| \sim \frac{T}{\pi} \Log T$ as $T \to \infty.$ Choosing $T$ large enough and using \eqref{eq: Gf OT}, it follows that there exists a zero $\rho_0$ such that $|\Im\rho_0|\le T$ and $G_{\mathfrak{f}}(\rho_0) \neq 0.$ This proves $a_{\rho_0} \neq 0$.

    Now recall that $f(t)=\sum_\rho a_\rho e^{i\gamma t}$ is absolutely uniformly convergent on $\R.$ Suppose $f$ is identically zero. Then writing $\rho_0= 1/2 +i \gamma_0$ and using the dominated convergence theorem, we obtain
    \begin{align}
        0 &= \lim_{Y \to \infty} \frac{1}{Y} \int_0^Y f(t) e^{- i \gamma_0 t} dt= \lim_{Y \to \infty} \sum_\rho a_\rho  \frac{1}{Y} \int_0^Y e^{ i (\gamma- \gamma_0) t} dt  \label{eq: 246} \\ 
        &=\sum_\rho a_\rho \lim_{Y \to \infty}  \frac{1}{Y} \int_0^Y e^{ i (\gamma- \gamma_0) t} dt= a_{\rho_0}, \label{eq: 247}
    \end{align}
    contradicting $a_{\rho_0} \neq 0.$ This proves that $f$ is not identically zero.

    Finally, suppose that $S$ is identically zero, then by \eqref{eq: S decomp}, it follows that $f(t) \to 0$ as $t\to \infty.$ The same contradiction follows from \eqref{eq: 246} and \eqref{eq: 247}, since $f$ is bounded. This completes the proof of Lemma \ref{lem: arho0 not zero}
\end{proof}

We now show that the pointwise limit does not exist. Combining \eqref{eq: biasexp2} and \eqref{eq: S decomp} yields
\begin{equation}
B_{\mathfrak{f}}^{\exp}(e^t;z,w)=c_{1/2}(\mathfrak{f};z,w) + t^{-i\tau}f(t) +o(1), \qquad t\to\infty.
\end{equation}

Thus, to prove the pointwise limit does not exist, it suffices to show the limit as $t \to \infty$ of $t^{-i\tau}f(t)$ does not exist. Indeed, suppose, for a contradiction, that $t^{-i\tau}f(t) \to \ell \in \C$ as $t \to \infty$. Then
\begin{equation} \label{eq: f dne}
    f(t)= \ell  t^{i\tau}  + o(1), \qquad t \to \infty.
\end{equation}
Now by Lemma \ref{lem: arho0 not zero}, there exists a zero $\rho_0$ such that $a_{\rho_0} \neq 0.$ By absolute convergence of $f(t),$ we know that
\begin{equation}\label{eq: arho0}
    a_{\rho_0}= \lim_{Y\to\infty}\frac{1}{Y}\int_1^Y f(t)e^{-i\gamma_0t} dt.
\end{equation}
However, substituting \eqref{eq: f dne} into \eqref{eq: arho0} and using Lemma \ref{lem imp fact}, we see that 
\begin{equation}
    a_{\rho_0}=\ell\lim_{Y\to\infty}\frac{1}{Y}\int_1^Y t^{i\tau}e^{-i\gamma_0t} dt=0,
\end{equation}
where the last equality follows by an integration by parts. This is a contradiction. Therefore,  the limit as $t \to \infty$ of $t^{-i\tau}f(t)$ does not exist.

This proves $A_{\mathfrak{f}}^{\exp}$ has apparent bias $c_{1/2}(\mathfrak{f};z,w)$ and finishes the proof of Theorem \ref{thm: bias theorem}$(2)$.

\end{proof}

Finally, we prove Theorem \ref{thm: bias theorem}$(3)$.

\begin{proof}[Proof of Theorem \ref{thm: bias theorem}\textup{(3)}]
First, if $c_{1/2}(\mathfrak{f};z,w)=0$ and $\Re(z+w) > 0,$ it is immediate from \eqref{eq: biasexp2} that the pointwise limit of $B_{\mathfrak{f}}^{\exp}(x;z,w)$ is 0, so there is no persistent bias, and since the limit exists, there is no apparent bias. Likewise, if $c_{1/2}(\mathfrak{f};z,w)=0$ and $\Re(z+w) = 0,$ the same arguments in the proof of Theorem \ref{thm: bias theorem}$(2)$ show that the limit of $B_{\mathfrak{f}}^{\exp}(x;z,w)$ does not exist and the logarithmic average tends to $0,$ so by definition it follows that there is no persistent or apparent bias.

It remains to show that $B_{\mathfrak{f}}^{\exp}(x;z,w)$ is unbounded when $\Re(z+w) < 0.$ Indeed, since $f\not\equiv 0$ (recall the definition of $f$ in \eqref{eq: f def}), we may choose $t_0\in\R$ with $|f(t_0)|>0$ and set $\delta:=\frac{|f(t_0)|}{4}>0.$ Now since $f$ is Bohr almost periodic (a uniform limit of trigonometric polynomials), the set of $\delta$-almost periods
\begin{equation}
     \mathcal{T}_\delta:=\Bigl\{s\in\R:\ \sup_{t\in\R}|f(t+s)-f(t)|<\delta\Bigr\}
\end{equation}
is relatively dense: there exists $M>0$ such that $[t,t+M]  \cap   \mathcal{T}_\delta \neq \emptyset$ for all $t \in \R$ \cite[p. 729]{oliaro2012almost}.

Thus, for $s\in\mathcal{T}_\delta$, we have
\begin{equation}
    |f(t_0+s)|
\ge |f(t_0)|-|f(t_0+s)-f(t_0)|
> |f(t_0)|-\delta
=3\delta
>2\delta.
\end{equation}
Thus $t_0+\mathcal{T}_\delta \subseteq \{t:\ |f(t)|>2\delta\}$, and since
$\mathcal{T}_\delta$ is relatively dense, so is $\{t:\ |f(t)|>2\delta\}$.

Therefore, there exists a sequence $(t_n)_{n\ge1}$ such that $t_n\to\infty$ and
\begin{equation}
|f(t_n)|\ge 2\delta \qquad n\ge 1.    
\end{equation}

Using \eqref{eq: S decomp}, we get for large $n \ge 1$ that
\begin{equation}\label{eq: S(et)}
  |S(e^{t_n})|\ge \delta.    
\end{equation}

Finally, put $x_n:=e^{t_n}.$ Substituting $x=x_n$ into \eqref{eq: biasexp2}, applying \eqref{eq: S(et)}, and using $\Re(z+w)<0$ and $t_n\to\infty$, we get
\begin{equation}
      |B_{\mathfrak{f}}^{\exp}(x_n;z,w)|
  \ge t_n^{-\Re(z+w)} |S(x_n)| - |c_{1/2}(\mathfrak{f};z,w)| - 1
  \ge \delta t_n^{-\Re(z+w)} - |c_{1/2}(\mathfrak{f};z,w)| - 1
  \xrightarrow[n\to\infty]{}\infty,
\end{equation}
which proves that $B_{\mathfrak{f}}^{\exp}(x;z,w)$ is unbounded as $x\to\infty$.

\end{proof}

This completes the proof of Theorem \ref{thm: bias theorem}.

\end{proof}

\section{Conclusion}

In this paper, we have shown that the type of bias at the scale $x^{1/2}(\Log x)^{w-1}$ is completely determined by the parameters $(\varepsilon_1,\varepsilon_2)$, while the higher coefficients $(\varepsilon_k)_{k\ge 3}$ contribute only at a strictly lower level in the explicit formula. The methods of this paper suggest several further directions, including extensions to the complementary parameter ranges not discussed here. We plan to address these questions in future work.

\section*{Acknowledgements}

I am grateful to Ghaith Hiary for his guidance and support throughout the process of developing this work. I am also grateful to Ovidiu Costin for several helpful discussions in asymptotic analysis. I further thank Greg Martin and Timothy S. Trudgian for answering some questions concerning their work on fake M\"obius functions.

\bibliographystyle{plain}
\bibliography{ref}

\bigskip

\noindent\textsc{Ali Saraeb}\\
Department of Mathematics\\
The Ohio State University\\
Columbus, OH 43210, USA\\
\textit{Email:} \texttt{saraeb.1@osu.edu}

\end{document}